\RequirePackage{fix-cm}
\documentclass[smallextended]{svjour3}       % onecolumn (second format)
\smartqed  % flush right qed marks, e.g. at end of proof
\usepackage{graphicx}
\usepackage{amsmath}
\usepackage{amssymb}
\usepackage{mathtools}
\usepackage{mlmath}
\usepackage{booktabs}
\usepackage{hyperref}

\usepackage{amssymb,pifont}
\usepackage{fontawesome5}

\usepackage{algorithm}
\usepackage{algorithmic}
\newtheorem{assumption}[theorem]{Assumption}

\DeclareMathOperator*{\argmin}{arg\,min}
\begin{document}

\title{A SAV Neural Network Method for Allen--Cahn and Cahn--Hilliard Equations
}
% \subtitle{Do you have a subtitle?\\ If so, write it here}

%\titlerunning{Short form of title}        % if too long for running head

\author{\mbox{Tao Luo \textsuperscript{1,2 *}\and Guihong Wang \textsuperscript{1}\and Wei Zhao \textsuperscript{1 $\dagger$}}}
\authorrunning{Tao Luo et al.}

%\authorrunning{Short form of author list} % if too long for running head

\institute{Tao Luo \at
              \email{luotao41@sjtu.edu.cn}           %  \\
%             \emph{Present address:} of F. Author  %  if needed
           \and
           Guihong Wang \at
              \email{theodore123@sjtu.edu.cn}
            \and 
            Wei Zhao \at
            \email{zhaowei01@sjtu.edu.cn}
            \and
\textsuperscript{\rm 1} School of Mathematical Sciences, Shanghai Jiao Tong University, Shanghai, China \\
\textsuperscript{\rm 2} Institute of Natural Sciences, MOE-LSC, CMA-Shanghai, Shanghai Jiao Tong University, Shanghai, China \\
\textsuperscript{\rm *} Corresponding author \\
\textsuperscript{\rm $\dagger$} In alphabetic order
}

% \date{Received: date / Accepted: date}
% The correct dates will be entered by the editor

\maketitle

\begin{abstract}
Phase-field equations are fundamental examples 
of gradient-flow systems and require numerical solvers that preserve intrinsic energy dissipation.
In this work, we propose a neural network-based method that combines an SAV time
discretization with a mesh-free neural representation.
We establish the unconditional energy stability of the underlying SAV semi-discretization,
quantify the energy defect caused by solver residuals, and derive a conditional
high-probability error estimate for the Allen--Cahn equation. Numerical experiments show that our approach 
consistently exhibits energy-stable behavior, while outperforming existing neural network-based solvers.
\keywords{Phase-field Equations\and Energy Stability\and Neural Network Method\and Time-stepping Method}
% \PACS{PACS code1 \and PACS code2 \and more}
\subclass{65M20 \and 65Q10 \and 68T07}
\end{abstract}

\section{Introduction}

Partial differential equations (PDEs) play a central role in a wide range of scientific and 
engineering disciplines, including fluid dynamics \cite{WendtComputational2009}, materials science \cite{LeeComputational2016}, phase transition phenomena \cite{CahnFree1958JCP,AllenMicroscopic1979AM} and
chemical reaction-diffusion systems \cite{SchatzQuantum2002}. These equations are 
typically derived by modeling fundamental physical and chemical principles, and serve as 
the primary mathematical framework for analyzing complex processes.
Among them, gradient flows form an important class of energy-driven dynamical systems, 
in which the temporal evolution of the state variable is governed by the variational 
derivative of an underlying energy functional. This structure naturally induces energy 
dissipation and provides a principled description of irreversible processes.

Phase-field equations are prominent gradient-flow models for interface dynamics.
Canonical examples are the Allen--Cahn and Cahn--Hilliard equations, both of
which are governed by a Lyapunov functional commonly referred to as the free energy.
In particular, the Allen--Cahn equations correspond to an $L^2$-gradient flow, whereas the Cahn--Hilliard equations can 
be formulated as an $H^{-1}$-gradient flow. These nonlinear equations exhibit rich multi-scale behavior and often involve 
stiff dynamics due to the presence of small interfacial parameters, posing significant challenges for both 
classical numerical solvers and learning-based approaches.
% \iffalse
% \begin{figure}[ht]
%   \vskip 0.2in
%   \begin{center}
%     \centerline{\includegraphics[width=0.8\columnwidth]{learning_process_new.png}}
%     \caption{
%       Learning procedures
%     }
%   \label{fig:intro}
%   \end{center}
% \end{figure}
% \fi
\begin{figure}[ht]
  % \vskip 0.1in
  \begin{center}
    \centerline{\includegraphics[width=0.5\columnwidth]{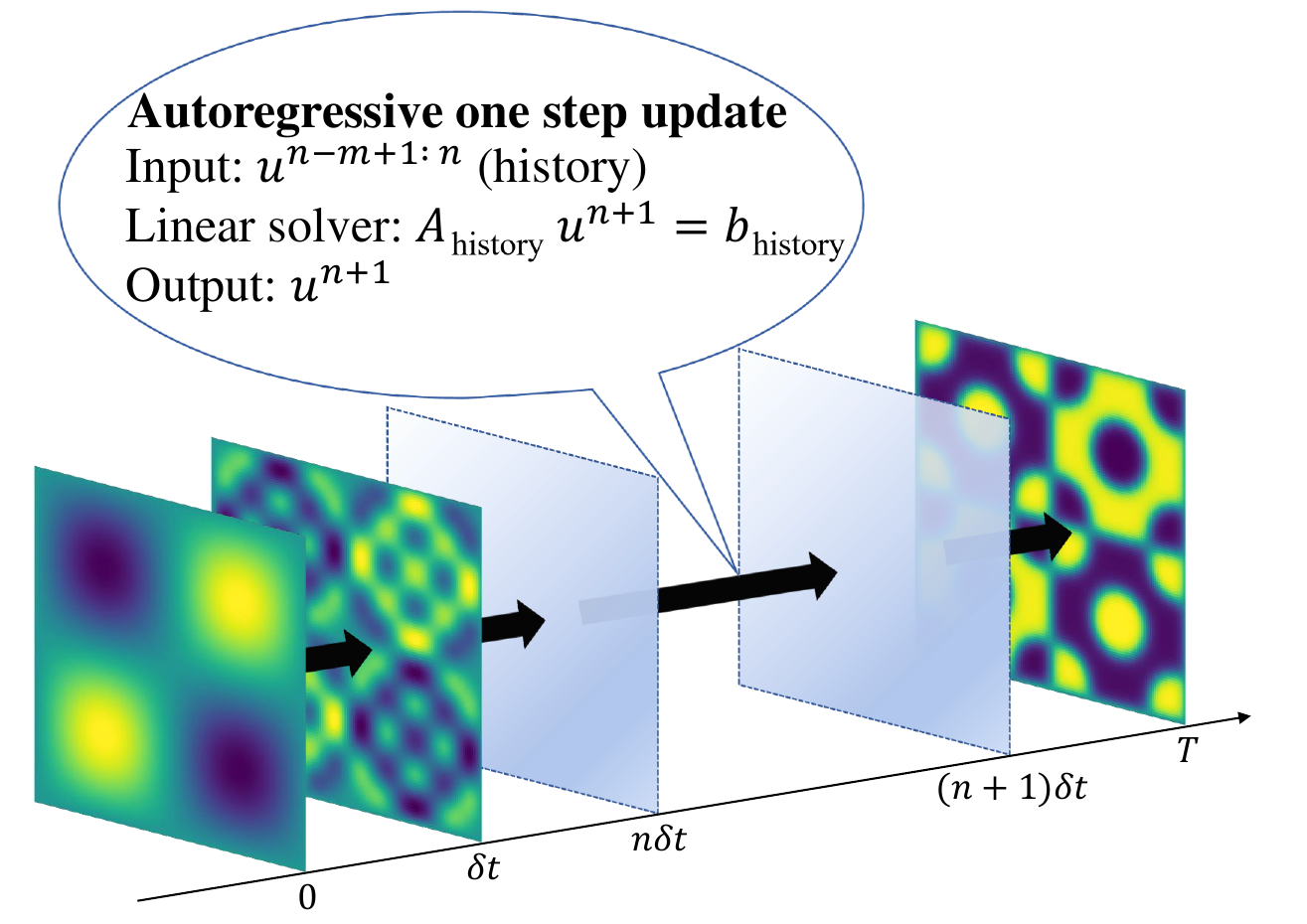}}
    \caption{
      Illustration of the learning procedure of the proposed method based on physics-informed updates from history solutions.
    }
  \label{fig:intro}
  \end{center}
  \vskip -0.1in
\end{figure}

% A natural approach to solving phase-field equations is to treat space and time separately.
% Classical numerical methods first discretize the spatial domain—using spectral, finite element, 
% or finite difference techniques—and then apply conventional time integrators to advance the resulting system.
% While effective and well-understood, this paradigm also highlights the central role of time integration 
% in determining stability and long-time behavior, making it a natural target for learning-based alternatives.

Over the past decades, numerical solvers typically adopt a time-space splitting strategy, discretizing the evolution in time and then resolving the resulting subproblems in space. 
% via finite differences, finite elements, or spectral methods. 
To capture the energy-dissipation structure inherent in phase-field models, several representative classes of energy-stable schemes have been proposed. 
The convex splitting approach constructs energy-stable schemes by implicitly treating the convex part of the free energy and explicitly handling the concave part~\cite{ShenSecondorder2012SJNA}.
The invariant energy quadratization (IEQ) approach introduces auxiliary variables to transform nonlinear free-energy terms into quadratic forms, leading to linear schemes with guaranteed discrete energy decay~\cite{yang2017efficient}; 
the scalar auxiliary variable (SAV) approach can be viewed as a refinement of IEQ, replacing field-type auxiliary variables with a single scalar variable to simplify implementation while retaining unconditional energy stability~\cite{ShenScalar2018JCP,zhu2019efficient,huang2022new}; 
and variational time discretizations such as the Jordan--Kinderlehrer--Otto (JKO) minimizing-movement scheme advance the solution by solving a sequence of energy-plus-metric minimization problems, thereby enforcing energy dissipation at the discrete level \cite{carrillo2022primal,carrillo2024structure}.
% Classical solvers for phase-field models rely on carefully designed spatial discretizations 
% (e.g., finite difference/element or spectral methods) and, more importantly, structure-preserving time integrators 
% that respect intrinsic properties such as energy dissipation and, for conserved dynamics, mass conservation. 
% Representative strategies include semi-implicit schemes and energy-stable formulations (e.g., convex splitting or 
% auxiliary-variable approaches), which enable robust long-time simulations under stiff interfacial scales.

In recent years, an increasing number of neural network-based solvers 
for PDEs have been proposed. Among them, physics-informed neural networks 
(PINNs) \cite{RaissiPhysicsinformed2019JCP} and their numerous variants \cite{WangPirateNets2024JMLR,WangRespecting2024CMAME} have attracted particular attention. 
To improve the accuracy towards the Allen--Cahn and Cahn--Hilliard equations, temporal adaptive 
sampling method \cite{WightSolving2021CCP} and time marching method \cite{MatteyNovel2022CMAMEa} are proposed,
which are motivated by the strong temporal dependency 
inherent in gradient-flow equations. Consistently, prior work \cite{WangRespecting2024CMAME} has pointed out that PINNs training should follow physical 
causality, requiring accurate optimization at early times before progressing to later stages. These observations naturally motivate 
a sequential training strategy that aligns the learning process with the underlying temporal dynamics, which is
consistent with numerical time-discretization schemes. In contrast, a class of neural network-based 
discretization methods \cite{HeAdaptive2023EABE,ChenTENG2024P4ICML}, 
which are different from time-marching strategy in formulation, have been widely studied. The core idea 
is to discretize time and employ a sequence of networks to learn the solution at each time step and advance the solution in an iterative manner. 
To mitigate the issue of computational cost, prior works \cite{CalabroTime2023AMC,WangSemidiscrete2025} 
suggest updating only the outer-layer parameters during training, which significantly improves efficiency 
while maintaining competitive accuracy.

Moreover, recent studies have incorporated gradient-flow structure into neural solvers through energetic variational discretizations \cite{HuEVNN2024} and SAV-based evolution of neural parameters \cite{LinEvoKAN2025}. Sequential neural time integration has also been studied within a unified optimize-then-discretize and discretize-then-optimize framework \cite{ZhangSequential2024}. More closely related approaches include GTransNet-BDF, which combines fixed-feature collocation with stabilized BDF schemes and a mass-conserving projection \cite{DoanGTransNetCH2026}. The present work focuses on BDF2–SAV collocation least-squares solves, quantifying the solver-induced energy defect and establishing a conditional high-probability error estimate that accounts for residual and sampling errors.

\textbf{Contributions.}
In this work, we propose an efficient SAV-based neural method for the
Allen--Cahn and Cahn--Hilliard equations.
Our main contributions are summarized as follows:
\begin{enumerate}
  \item We propose a structure-preserving neural method based on an
  unconditionally energy-stable SAV semi-discretization, and quantify the
  energy defect caused by the practical least-squares solver.
    
  \item We demonstrate that the proposed method naturally extends to \emph{irregular computational domains},
  highlighting the flexibility of mesh-free neural representations.
  
  \item We provide \emph{rigorous stability and error analyses}, including a conditional high-probability error estimate for domains satisfying the required elliptic regularity assumptions.

\end{enumerate}

\section{Formulation}

We consider gradient flows of the form
\begin{equation}
\begin{aligned}
    \frac{\partial \phi}{\partial t}  &= - \mathcal{G} \mu,\\
    \mu &= \frac{\delta E}{\delta\phi},
\end{aligned}
\label{eq:gf}
\end{equation}
where \(\mathcal G\) is a self-adjoint, positive-semidefinite mobility operator. In particular, \(\mathcal G=I\) yields the Allen--Cahn equation, whereas \(\mathcal G=-\Delta\) yields the Cahn--Hilliard equation. The associated free energy is given by
\[
E(\phi)=\int_\Omega
\left(\frac12\phi\mathcal L\phi+F(\phi)\right)\,dx,
\]
where \(\mathcal L\) is a linear, self-adjoint, positive-semidefinite operator. For the classical double-well potential
\[
F(\phi)=\frac{(\phi^2-1)^2}{4\epsilon^2},
\]
small values of \(\epsilon\) give rise to stiff interfacial dynamics.

We focus on the gradient-flow equations defined by Eq.~\eqref{eq:gf} on a bounded domain \(\Omega\subset\mathbb R^d\), subject to either periodic boundary conditions or homogeneous Neumann boundary conditions for both \(\phi\) and \(\mu\), together with the initial condition
\[
\phi(\boldsymbol{x},0)=\phi^0(\boldsymbol{x}),
\qquad \boldsymbol{x}\in\Omega.
\]

% \begin{equation}
%   \begin{aligned}
%     \frac{\partial \phi}{\partial t}  &= - \mathcal{G} \mu,\\
%     \mu &= -\Delta \phi + F^{\prime}(\phi),
%   \end{aligned}
%   \label{eq:target}
% \end{equation}
\subsection{Temporal Discretization and SAV Approach}
In this work, we focus on the Euler method and the second-order backward differentiation formula (BDF2) 
for analysis and implementation. The proposed method readily extends to other time-integration schemes, 
including Adams--Moulton (AM), Crank--Nicolson (CN), and Runge--Kutta (RK) methods \cite{HairerSolving1993}.

Define
\[
E_1[v]:=\int_\Omega F(v)\,dx+C_0,
\]
where \(C_0\) is chosen sufficiently large so that
$E_1[v]\ge c_*>0$
for all states considered in this paper. The SAV approach introduces
the auxiliary variable $r=\sqrt{E_1[\phi]}$, which enables the construction
of unconditionally energy-stable time-stepping schemes. The reformulated
system reads
\begin{equation}
\label{eq:sav}
\begin{aligned}
  \phi_t &= - \mathcal{G}\mu, \\
  \mu &= \mathcal{L} \phi+\frac{r(t)}{\sqrt{\int_{\Omega}F(\phi) \D x+C_0}}F^{\prime}({\phi}), \\
  r_{t}&=\frac{1}{2\sqrt{\int_{\Omega}F(\phi) \D x+C_{0}}}\int_{\Omega}F^{\prime}({\phi})\phi_{t}\D x.
\end{aligned}
\end{equation}

Once the time domain is discretized as $t_n = n\delta t, n = 0, 1,\ldots,N_T$, where $\delta t$ is the time step size, we can apply 
the semi-implicit Euler method to Eq.~\eqref{eq:sav}, leading to the following semi-discrete form:
\begin{equation}
\begin{aligned}
  \frac{\phi^{n+1}-\phi^n}{\delta t} &= -\mathcal{G}\left(\mathcal{L}\phi^{n+1}+r^{n+1}h(\phi^{n})\right), \\
  \frac{r^{n+1}-r^n}{\delta t} &= \int_\Omega \frac{h(\phi^n)}{2}\frac{\phi^{n+1}-\phi^n}{\delta t}\D x ,
 \end{aligned}
 \label{eq:saveuler}
\end{equation}
where $h(\phi) = \frac{F^{\prime}(\phi)}{\sqrt{\int_{\Omega} F(\phi) \D x + C_0}}$, and the BDF2 version is as follows:

\begin{equation}
\begin{aligned}
  \frac{3\phi^{n+1}-4\phi^n+\phi^{n-1}}{2\delta t} &= -\mathcal{G}\mu^{n+1}, \\
  \mu^{n+1} &= \mathcal{L}\phi^{n+1}+ r^{n+1} h(\phi^{n+\frac{1}{2}}) , \\
  \frac{3r^{n+1}-4r^n+r^{n-1}}{2\delta t} &= 
  \int_\Omega\frac{h(\phi^{n+\frac{1}{2}})}{2}\frac{3\phi^{n+1}-4\phi^n+\phi^{n-1}}{2\delta t}\D x .
 \end{aligned}
 \label{eq:savbdf2}
\end{equation}
Typically, we choose $\phi^{n+1/2}=2\phi^{n}-\phi^{n-1}$.

\subsection{Random Neural Basis}
Neural networks provide a mesh-free representation with built-in automatic differentiation, 
which has led to their widespread adoption in neural PDE solvers \cite{ZhangTransferable2024JSC,HuangNeural2025CNSNS}. The random neural basis can be 
viewed as a class of neural networks with fixed random inner-layer parameters, where only the 
outer layer is learned. These models have been shown to achieve spectral accuracy for approximating 
smooth functions, while retaining the simplicity of linear training. Notable instantiations include 
the Random Feature Method \cite{ChenBridging2022JML}, Extreme Learning Machines \cite{WangExtreme2024CMAME}, and Random Neural Networks \cite{ShangRandomized2024JEM}.
A function represented by random neural bases can be written as
\begin{equation}
  u(\vx) = \sum_{j=1}^{M} c_j \varphi_{\vtheta_j}(\vx).
\end{equation}
We denote the neural network basis functions 
$\varphi_{j} \coloneqq \varphi_{\vtheta_j} $, $\vtheta \coloneqq \text{vec}( \vtheta_1,\cdots,\vtheta_M ) $, $\vc \coloneqq \text{vec}(c_1,\cdots,c_M)$ and let $M$ denote the number of basis functions, i.e.,\  the network width of the penultimate layer.

Given sampled data points $ \vX = \{\vx_i\}_{i=1}^{N} $, one can easily solve a linear equation $\mathcal{F}u=f$ by the following procedures.

\begin{algorithm}[htbp]
\caption{Solving the Linear System}\label{alg:lin}
\begin{algorithmic}
\STATE {\bfseries Input:} Data $\vX$, neural basis $\{\varphi_j\}_{j=1}^{M}$, operator $\mathcal{F}$ and $f$
\STATE {\bfseries 1.} Compute the Jacobian matrix 
  $$\vJ \coloneqq \frac{\partial (\mathcal{F}u)}{\partial \vc}(\vX) = \text{CONCAT} \left\{\mathcal{F}\varphi_{\vtheta_j}(\vX) \right\}_{j=1}^{M}$$
\STATE {\bfseries 2.} Compute the right-hand side vector $\vb = f(\vX)$ (assume that there is no constant term in $\mathcal{F}$)
\STATE {\bfseries 3.} Solve the following least-squares problem
$$\hat{\vc} = \argmin_{\vc} \| \vJ\vc - \vb \|_2^2 $$
\STATE {\bfseries Output:} $\hat{u} = \sum_{j=1}^{M} \hat{c}_j\varphi_{j}$
\end{algorithmic}
\end{algorithm}
Note that $\vJ$ in Algorithm~\ref{alg:lin} is independent of $\vc$ and is determined solely by $\vtheta$ and $\vX$, 
thus can be precomputed by automatic differentiation and reused in temporal iterations.

\subsection{SAV-Based Neural Network Solver}
As illustrated in Fig.~\ref{fig:intro}, the learning task of the proposed method is to solve $\phi^{n+1}$ and $r^{n+1}$
from the history $\{\phi^{n},r^{n},\ldots,\phi^{n-m+1},r^{n-m+1}\}$,
where $m=1$ for the Euler method and $m=2$ for the BDF2 scheme. We
abstract systems~\eqref{eq:saveuler} and~\eqref{eq:savbdf2} as
\begin{equation}
  \label{eq:sys2sol}
  \begin{aligned}
    & \mathcal{L}_1(\phi^{n+1}, r^{n+1}) = f^n, \\
    & \mathcal{L}_2(\phi^{n+1}, r^{n+1}) = g^n.
  \end{aligned}
\end{equation}
For the Euler method, 
\begin{align*}
 \mathcal{L}_1(\phi^{n+1}, r^{n+1}) &= \left(\frac{1}{\delta t} + \mathcal{GL}\right)\phi^{n+1} + \mathcal{G}h^{n}r^{n+1}, &f^n&=\frac{\phi^n}{\delta t},\\
  \mathcal{L}_2(\phi^{n+1}, r^{n+1}) &= r^{n+1} - \frac{1}{2}\left\langle  h^{n}, \phi^{n+1} \right\rangle, &g^n&=r^n-\frac12\langle h^n,\phi^n\rangle,
\end{align*}
and for the BDF2 scheme,
\begin{align*}
 \mathcal{L}_1(\phi^{n+1}, r^{n+1}) &= \left(\frac{3}{2\delta t} + \mathcal{GL} \right)\phi^{n+1} + \mathcal{G}h^{n+\frac{1}{2}}r^{n+1}, \\
 f^n&=\frac{4\phi^n-\phi^{n-1}}{2\delta t},\\
  \mathcal{L}_2(\phi^{n+1}, r^{n+1}) &= 3 r^{n+1} - \frac{3}{2}\left\langle  h^{n+\frac{1}{2}}, \phi^{n+1} \right\rangle, \\
  g^n&=4r^n-r^{n-1}
 -\frac12\langle h^{n+1/2},4\phi^n-\phi^{n-1}\rangle, 
\end{align*}
where $h^{n}=h(\phi^n)$, $h^{n+\frac{1}{2}}=h(\phi^{n+\frac{1}{2}})$, and
$\langle\alpha,\beta\rangle=\int_{\Omega}\alpha\beta\D x$. 
%Through out theerror analysis, the integrals defining $E_1$, $h$, and the scalar SAV equation are assumed to be evaluated exactly; thus, Theorem~\ref{thm:error} quantifies collocation error but not numerical-quadrature error.
%As this system is linear for solution variable $\phi^{n+1}, r^{n+1}$, since the nonlinear terms $f^n$ and $g^n$ 
%involve only available states from previous time steps. We can solve Eq.~\eqref{eq:sys2sol} using 
%Algorithm~\ref{alg:lin} and we now present the detailed algorithm below.
The system~\eqref{eq:sys2sol} is linear in the unknowns $\phi^{n+1}$ and $r^{n+1}$, as the right-hand side terms $f^n$ and $g^n$ are evaluated using only available data from preceding time steps.
Thus, Algorithm~\ref{alg:lin} can be applied at each time level, as summarized in Algorithm~\ref{alg:evo}.
\begin{algorithm}[h]
\caption{Solving the Time-Evolution System}\label{alg:evo}
\begin{algorithmic}
\STATE {\bfseries Input:} Data $\vX=\{x_i\}_{i=1}^N$, initial state $\phi^0$, operator $\mathcal{G}$,
neural basis $\{\varphi_j\}_{j=1}^{M}$, time step size $\delta t$, number of time steps $N_T$
\STATE {\bfseries Initialization:} Represent or fit $\hat{\phi}^0$, set
$\hat r^0=\sqrt{E_1[\hat\phi^0]}$, and compute
$(\hat\phi^1,\hat r^1)$ from the Euler--SAV system using
Algorithm~\ref{alg:lin}
\FOR{$n=1$ {\bfseries to} $N_T-1$}
\IF{$n=1$ or $\{\varphi_j\}_{j=1}^{M}$ change or $\vX$ changes}
\STATE Compute Jacobian matrix $\vJ^{n+1}$ according to Eq.~\eqref{eq:sys2sol}
\ELSE
\STATE Update the history-dependent entries of $\vJ^{n+1}$
\ENDIF
\STATE Compute $\vb^{n+1}=\operatorname{CONCAT}\{f^n(\vX),g^n\}$
\STATE Solve the following problem 
  $$(\hat{\vc}^{n+1}_{\phi}, \hat{c}^{n+1}_{r}) = \argmin_{\vc = \mathrm{vec}({\vc}^{n+1}_{\phi}, {c}^{n+1}_{r})} \| \vJ^{n+1}\vc - \vb^{n+1} \|_2^2 $$
  and obtain $\hat{\phi}^{n+1} = \sum_{j=1}^{M} \hat{c}_{\phi,j}^{n+1}\varphi_{j}$, $\hat{r}^{n+1} = \hat{c}_r^{n+1}$
\ENDFOR
\STATE {\bfseries Output:} $\{(\hat\phi^n,\hat r^n)\}_{n=0}^{N_T}$
\end{algorithmic}
\end{algorithm}

Boundary conditions, denoted by $\mathcal B\phi^n=0$, may be incorporated as
additional residual rows and are omitted from the algorithm for brevity.
Algorithm~\ref{alg:evo} presents the BDF2 scheme; the Euler scheme follows a
similar procedure. The computational cost per time step mainly stems from
constructing the Jacobian matrix $\vJ^{n+1}$ and solving the linear
least-squares problem, with the former being the dominant expense.
As outlined in Algorithm~\ref{alg:evo}, the Jacobian need not be fully
recomputed at every step; only a few entries require updating, as illustrated
in Fig.~\ref{fig:jac}.
\begin{figure}[ht]
  % \vskip 0.2in
  \begin{center}
    \centerline{\includegraphics[width=0.5\columnwidth]{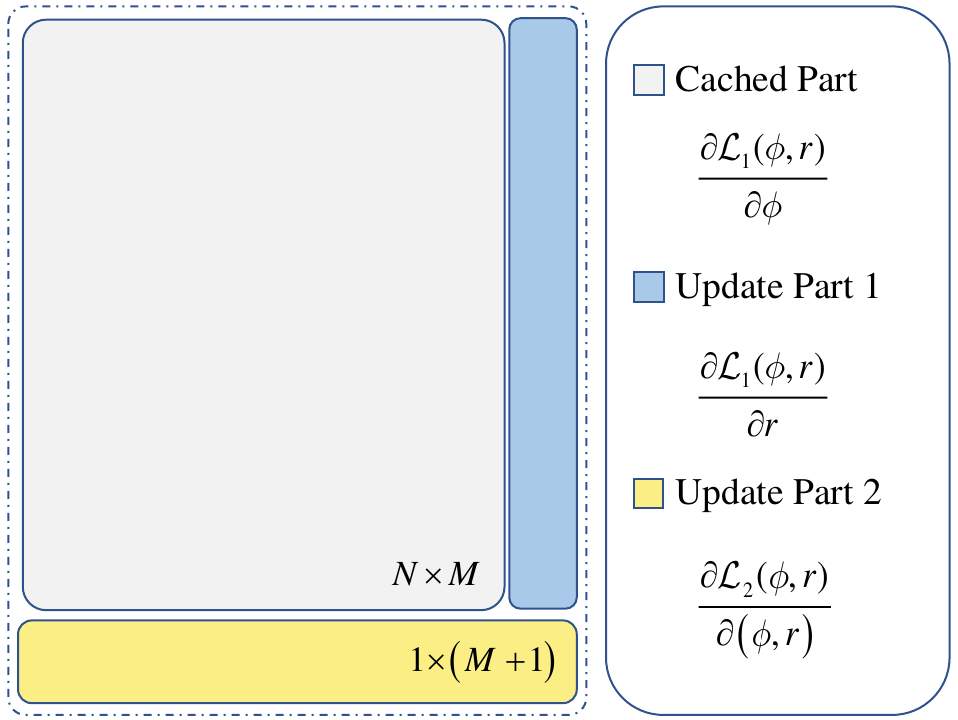}}
    \caption{
      The update of Jacobian matrix in the iterations of Algorithm~\ref{alg:evo}. Only the blue and yellow blocks have to
      be updated during iterations.
    }
  \label{fig:jac}
  \end{center}
  \vskip -0.2in
\end{figure}

\section{Energy Stability and Error Analysis}\label{sec:theory}

In this section, we establish the main theoretical properties of the proposed method. 
We first derive the energy laws satisfied by the exact BDF2--SAV
semi-discrete solution and by the neural numerical solution, respectively.
We then derive error estimates for the least-squares solver used to solve the linear subproblem at each time step in Algorithm~\ref{alg:evo}.

\subsection{Energy Laws for the Semi-Discrete and Numerical Solutions}\label{sec:energy-stability}

A principal advantage of the SAV formulation is that the resulting time-discrete
scheme inherits a modified energy law. We first recall the standard result for an
exact solution sequence of the BDF2--SAV semi-discretization.

\begin{theorem}[Energy stability of the BDF2--SAV semi-discretization, Ref.~\cite{shen2019new}]\label{thm:energy-stability}
Let
$\{(\phi^n,\mu^n,r^n)\}_{n\geq0}$ be a spatially continuous solution sequence
that satisfies the time-discrete scheme~\eqref{eq:savbdf2} exactly. Then, for
every $\delta t>0$ and $n\geq1$,
\begin{equation*}
\begin{aligned}
&\bar{E}^{n+1}-\bar{E}^{n}\\
&= -\delta t\,\left\langle\mathcal G\mu^{n+1},\mu^{n+1}\right\rangle
-\frac{1}{2}\left(r^{n+1}-2r^n+r^{n-1}\right)^2 \\
&\quad-\frac{1}{4}\left\langle
\mathcal L(\phi^{n+1}-2\phi^n+\phi^{n-1}),
\phi^{n+1}-2\phi^n+\phi^{n-1}\right\rangle,
\end{aligned}
\end{equation*}
where
\begin{equation*}
\begin{aligned}
\bar E^{n+1}
&=\frac12\left[\left(r^{n+1}\right)^2
+\left(2r^{n+1}-r^n\right)^2\right]\\
&\quad+\frac14\left[\left\langle\mathcal L\phi^{n+1},\phi^{n+1}\right\rangle
+\left\langle\mathcal L(2\phi^{n+1}-\phi^n),
2\phi^{n+1}-\phi^n\right\rangle\right].
\end{aligned}
\end{equation*}
In particular, $\bar E^{n+1}\leq \bar E^n$ for every $\delta t>0$.
\end{theorem}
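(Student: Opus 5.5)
We follow the standard BDF2 energy argument, built on the polarization identity
\[
2a(3a-4b+c)=a^2-b^2+(2a-b)^2-(2b-c)^2+(a-2b+c)^2,
\]
valid for reals $a,b,c$. The same identity holds with the product replaced by any symmetric positive-semidefinite bilinear form, in particular $\langle\mathcal L\cdot,\cdot\rangle$. To verify it, expand both sides: the left is $6a^2-8ab+2ac$, and collecting terms on the right gives the same.

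First, take the $L^2$ inner product of the first equation of \eqref{eq:savbdf2} with $2\delta t\,\mu^{n+1}$. This gives
\[
\langle 3\phi^{n+1}-4\phi^n+\phi^{n-1},\mu^{n+1}\rangle=-2\delta t\langle\mathcal G\mu^{n+1},\mu^{n+1}\rangle.
\]

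Second, take the inner product of the definition of $\mu^{n+1}$ with $3\phi^{n+1}-4\phi^n+\phi^{n-1}$. The left side equals the left side above. The right side has two pieces:
\begin{enumerate}
\item a linear part $\langle\mathcal L\phi^{n+1},3\phi^{n+1}-4\phi^n+\phi^{n-1}\rangle$;
\item a nonlinear part $r^{n+1}\langle h(\phi^{n+1/2}),3\phi^{n+1}-4\phi^n+\phi^{n-1}\rangle$.
\end{enumerate}
The third equation of \eqref{eq:savbdf2}, multiplied by $4\delta t\,r^{n+1}$, shows that the nonlinear part equals $2r^{n+1}(3r^{n+1}-4r^n+r^{n-1})$. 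This substitution is the key SAV cancellation: the explicit extrapolant $h(\phi^{n+1/2})$ is the same in both equations, so it drops out exactly, with no condition on $\delta t$.

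Third, apply the identity to both remaining terms:
\begin{enumerate}
\item to the scalar term with $(a,b,c)=(r^{n+1},r^n,r^{n-1})$;
\item to the $\mathcal L$-term with $(a,b,c)=(\phi^{n+1},\phi^n,\phi^{n-1})$, using self-adjointness of $\mathcal L$. This gives $\tfrac12$ times the identity's right-hand side in the $\mathcal L$-form.
\end{enumerate}
Collecting terms, the telescoping parts assemble into $2(\bar E^{n+1}-\bar E^n)$ with $\bar E$ as defined in the statement. The second-difference parts give $(r^{n+1}-2r^n+r^{n-1})^2$ and $\tfrac12\langle\mathcal L(\phi^{n+1}-2\phi^n+\phi^{n-1}),\phi^{n+1}-2\phi^n+\phi^{n-1}\rangle$. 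Dividing the resulting equality by $2$ yields exactly the stated identity.

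Finally, the monotonicity $\bar E^{n+1}\le\bar E^n$ follows because each term on the right-hand side is nonpositive. This uses that $\mathcal G$ and $\mathcal L$ are positive semidefinite, and no restriction on $\delta t$ is needed.

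The main obstacle is bookkeeping rather than an idea: keeping the factors $2\delta t$ versus $4\delta t$ and the factor $\tfrac12$ consistent between the $r$-part and the $\mathcal L$-part, so that the energy takes exactly the stated form with coefficients $\tfrac12$ and $\tfrac14$. I would check the normalization by testing a trivial case, such as $\mathcal G=0$ with constant sequences.
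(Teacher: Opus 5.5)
Your proof is correct and is the argument the paper itself relies on. Theorem~\ref{thm:energy-stability} is only cited in the paper, but its proof of Theorem~\ref{thm:residual-energy-law}, which reduces to it when $\rho_\phi^{n+1}=\rho_r^{n+1}=0$, does exactly what you do: it tests the phase equation with $\mu^{n+1}$, cancels the nonlinear term using $2r^{n+1}$ times the SAV equation, and applies the same BDF2 polarization identity with $B(a,b)=ab$ and $B(u,v)=\langle\mathcal Lu,v\rangle$. Your bookkeeping (the multiplier $4\delta t\,r^{n+1}$, the factor $\tfrac12$ on the $\mathcal L$-part, and the final division by $2$) is correct and gives the stated identity exactly.
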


Theorem~\ref{thm:energy-stability} concerns the exact solution of the
time-discrete BDF2--SAV scheme.
For the practical least-squares neural solver in Algorithm~\ref{alg:evo}, however, nonzero residuals are generally present, so the computed solution does not obey the same energy law.
The following result provides a quantitative characterization of this energy defect.
\begin{theorem}[Energy balance for the BDF2--SAV neural solution]
\label{thm:residual-energy-law}
Let $\{(\widehat\phi^n,\widehat r^n)\}_{n\geq0}$ denote the neural numerical
solution sequence associated with the time-discrete BDF2--SAV system. For
$n\geq1$, set
\[
\delta_2v^{n+1}:=3v^{n+1}-4v^n+v^{n-1},
\qquad
\widehat h^{n+\frac12}:=h(2\widehat\phi^n-\widehat\phi^{n-1}),
\]
and reconstruct the chemical potential by
\[
\widehat\mu^{n+1}:=\mathcal L\widehat\phi^{n+1}
+\widehat r^{n+1}\widehat h^{n+\frac12}.
\]
Assume that $\widehat\phi^{n-1}$, $\widehat\phi^n$,
$\widehat\phi^{n+1}$, and $\widehat\mu^{n+1}$ satisfy the boundary condition. Define the continuous
residuals $\rho_\phi^{n+1}$ and $\rho_r^{n+1}$ by
\begin{align}
\delta_2\widehat\phi^{n+1}
&=-2\delta t\,\mathcal G\widehat\mu^{n+1}+\rho_\phi^{n+1},
\label{eq:neural-phase-residual}\\
\delta_2\widehat r^{n+1}
&=\frac12\left\langle\widehat h^{n+\frac12},
\delta_2\widehat\phi^{n+1}\right\rangle+\rho_r^{n+1}.
\label{eq:neural-sav-residual}
\end{align}
Define $\|v\|_{\mathcal L}^2:=\langle\mathcal Lv,v\rangle$ and
\begin{align*}
\widehat{\bar E}^{n+1}
&:=\frac12\left[\left(\widehat r^{n+1}\right)^2
+\left(2\widehat r^{n+1}-\widehat r^n\right)^2\right] + \frac14\left[\|\widehat\phi^{n+1}\|_{\mathcal L}^2
+\|2\widehat\phi^{n+1}-\widehat\phi^n\|_{\mathcal L}^2\right].
\end{align*}
Then the modified energy satisfies
\begin{align}
\widehat{\bar E}^{n+1}-\widehat{\bar E}^{n}
&=-\delta t\,\left\langle\mathcal G\widehat\mu^{n+1},
\widehat\mu^{n+1}\right\rangle
-\frac14\|\widehat\phi^{n+1}-2\widehat\phi^n
+\widehat\phi^{n-1}\|_{\mathcal L}^2\nonumber\\
&\quad-\frac12\left(\widehat r^{n+1}-2\widehat r^n
+\widehat r^{n-1}\right)^2
+\frac12\left\langle\rho_\phi^{n+1},\widehat\mu^{n+1}\right\rangle
+\widehat r^{n+1}\rho_r^{n+1}.
\label{eq:residual-energy-law}
\end{align}
Consequently, the modified energy is non-increasing whenever the combined
residual work in the last two terms of~\eqref{eq:residual-energy-law} does not
exceed the total nonnegative dissipation represented by the preceding three
terms.
\end{theorem}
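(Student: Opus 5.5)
The plan is to repeat the energy computation behind Theorem~\ref{thm:energy-stability}, keeping the residuals $\rho_\phi^{n+1}$ and $\rho_r^{n+1}$ as extra terms. We test the phase residual equation~\eqref{eq:neural-phase-residual} against $\widehat\mu^{n+1}$ and the SAV residual equation~\eqref{eq:neural-sav-residual} against $2\widehat r^{n+1}$.

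The central tool is the scalar identity
\[
2a^{n+1}(3a^{n+1}-4a^n+a^{n-1}) = \left[(a^{n+1})^2+(2a^{n+1}-a^n)^2\right]-\left[(a^n)^2+(2a^n-a^{n-1})^2\right]+(a^{n+1}-2a^n+a^{n-1})^2 .
\]
It can be checked by expanding both sides. Since $\mathcal L$ is self-adjoint, the same identity holds with products replaced by $\langle\mathcal L\cdot,\cdot\rangle$, giving
\[
\langle\mathcal L\widehat\phi^{n+1},\delta_2\widehat\phi^{n+1}\rangle=\tfrac12\left[\|\widehat\phi^{n+1}\|_{\mathcal L}^2+\|2\widehat\phi^{n+1}-\widehat\phi^n\|_{\mathcal L}^2\right]-\tfrac12\left[\cdots\right]^n+\tfrac12\|\widehat\phi^{n+1}-2\widehat\phi^n+\widehat\phi^{n-1}\|_{\mathcal L}^2 .
\]
The self-adjointness step uses that $\widehat\phi^{n-1},\widehat\phi^n,\widehat\phi^{n+1}$ satisfy the boundary condition, which is exactly the stated assumption.

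The steps, in order:
\begin{enumerate}
\item Take the $L^2$ inner product of~\eqref{eq:neural-phase-residual} with $\widehat\mu^{n+1}$. Since $\mathcal G$ is self-adjoint and $\widehat\mu^{n+1}$ satisfies the boundary condition, the right side becomes $-2\delta t\langle\mathcal G\widehat\mu^{n+1},\widehat\mu^{n+1}\rangle+\langle\rho_\phi^{n+1},\widehat\mu^{n+1}\rangle$.
\item Expand the left side using $\widehat\mu^{n+1}=\mathcal L\widehat\phi^{n+1}+\widehat r^{n+1}\widehat h^{n+\frac12}$. This gives $\langle\mathcal L\widehat\phi^{n+1},\delta_2\widehat\phi^{n+1}\rangle+\widehat r^{n+1}\langle\widehat h^{n+\frac12},\delta_2\widehat\phi^{n+1}\rangle$.
\item Multiply~\eqref{eq:neural-sav-residual} by $2\widehat r^{n+1}$. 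This gives $2\widehat r^{n+1}\delta_2\widehat r^{n+1}=\widehat r^{n+1}\langle\widehat h^{n+\frac12},\delta_2\widehat\phi^{n+1}\rangle+2\widehat r^{n+1}\rho_r^{n+1}$.
\item Use step 3 to eliminate the nonlinear coupling term $\widehat r^{n+1}\langle\widehat h^{n+\frac12},\delta_2\widehat\phi^{n+1}\rangle$ from step 2. This cancellation is the key SAV mechanism, and it holds regardless of what $\widehat h^{n+\frac12}$ is.
\item Apply the scalar identity to $2\widehat r^{n+1}\delta_2\widehat r^{n+1}$ and the operator version to the $\mathcal L$-term.
\end{enumerate}

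After these steps we have $2(\widehat{\bar E}^{n+1}-\widehat{\bar E}^n)$ plus the two nonnegative second-difference terms equal to $-2\delta t\langle\mathcal G\widehat\mu^{n+1},\widehat\mu^{n+1}\rangle+\langle\rho_\phi^{n+1},\widehat\mu^{n+1}\rangle+2\widehat r^{n+1}\rho_r^{n+1}$. Dividing by $2$ yields~\eqref{eq:residual-energy-law}, including the coefficients $\tfrac14$ on the $\mathcal L$ second difference and $\tfrac12$ on the $r$ second difference.

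The consequence follows because each of the three dissipation terms is nonpositive: $\mathcal G$ and $\mathcal L$ are positive semidefinite, and the $r$ term is minus a square.

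The main obstacle is bookkeeping the factors of $2$ and $\tfrac12$ so they match the stated normalization of $\widehat{\bar E}$ and of the residual terms. The other point needing care is justifying the integrations by parts behind self-adjointness of $\mathcal G$ and $\mathcal L$ on the neural iterates, for which the boundary-condition hypothesis is used. Beyond that, the argument is a direct algebraic verification.
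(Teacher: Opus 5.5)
Your proposal is correct and follows the paper's proof essentially step for step: you test the phase residual with $\widehat\mu^{n+1}$, multiply the SAV residual by $2\widehat r^{n+1}$ to cancel the nonlinear coupling, apply the BDF2 identity to the $r$-terms and (via symmetry of $\langle\mathcal L\cdot,\cdot\rangle$) to the $\phi$-terms, and divide by $2$. One minor correction: step 1 needs no self-adjointness of $\mathcal G$, since it is just linearity; the boundary condition on $\widehat\mu^{n+1}$ is instead what makes $\langle\mathcal G\widehat\mu^{n+1},\widehat\mu^{n+1}\rangle\ge0$, which is the nonnegativity behind the final ``consequently'' clause.
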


\begin{proof}
Take the inner product of~\eqref{eq:neural-phase-residual} with
$\widehat\mu^{n+1}$ and use its reconstruction above. Multiplying
\eqref{eq:neural-sav-residual} by $2\widehat r^{n+1}$ produces the same
nonlinear term, which can therefore be eliminated. This gives
\begin{align*}
&\left\langle\mathcal L\widehat\phi^{n+1},
\delta_2\widehat\phi^{n+1}\right\rangle
+2\widehat r^{n+1}\delta_2\widehat r^{n+1}\\
&\quad=-2\delta t\left\langle\mathcal G\widehat\mu^{n+1},
\widehat\mu^{n+1}\right\rangle
+\left\langle\rho_\phi^{n+1},\widehat\mu^{n+1}\right\rangle
+2\widehat r^{n+1}\rho_r^{n+1}.
\end{align*}
For any symmetric positive-semidefinite bilinear form $B(\cdot,\cdot)$, let
$\|v\|_B^2:=B(v,v)$. The BDF2 identity
\[
\begin{aligned}
2B(a,3a-4b+c)
&=\|a\|_B^2+\|2a-b\|_B^2-\|b\|_B^2\\
&\quad-\|2b-c\|_B^2+\|a-2b+c\|_B^2
\end{aligned}
\]
applied with $B(a,b)=ab$ for $r$ and
$B(u,v)=\langle\mathcal Lu,v\rangle$ for $\phi$ proves
\eqref{eq:residual-energy-law}.
\end{proof}

Theorem~\ref{thm:residual-energy-law} reduces to Theorem~\ref{thm:energy-stability} when all residuals vanish. 
It assumes that the trial functions satisfy the boundary conditions exactly; otherwise, an additional boundary-work term appears. 
A collocation analogue, \(\widehat{\bar E}_N^{n+1}\), can be defined by replacing continuous inner products with empirical ones. 
If the discrete operators remain symmetric and positive semidefinite, the same argument yields a discrete energy balance and bounds the residual-work terms by discrete residual norms. 
Under suitable stability, norm-equivalence, and boundedness assumptions, the least-squares residual may control the discrete energy defect. 
We do not pursue this result because the required operator properties cannot be guaranteed for general collocation points.

\subsection{Error estimates}\label{sec:error-estimates}

In this section, we derive error estimates for the proposed solver by comparing the semi-discrete numerical solution with the exact solution of Eq.~\eqref{eq:sav}. 
To highlight the main ideas, we restrict our analysis to the Allen--Cahn case, with \(\mathcal G=I\) and \(\mathcal L=-\Delta+\lambda I\), and consider the BDF2-SAV scheme \eqref{eq:savbdf2}. 
For the classical Ginzburg--Landau free energy
\begin{equation}\label{eq:GL_energy}
E(\phi)
=
\int_{\Omega}
\left[
\frac{1}{2}\lvert\nabla\phi\rvert^{2}
+
\frac{(\phi^{2}-1)^{2}}{4\epsilon^{2}}
\right]\,dx,
\end{equation}
the associated Allen--Cahn equation corresponds to $ \mathcal L=-\Delta$, i.e. \(\lambda=0\). For any fixed \(\beta>0\), however, Eq.~\eqref{eq:GL_energy} can be equivalently rewritten as
\begin{equation}\label{eq:GL_energy_shift}
E(\phi)
=
\int_{\Omega}
\left[
\frac{1}{2}\lvert\nabla\phi\rvert^{2}
+
\frac{\beta}{2\epsilon^{2}}\phi^{2}
+
\frac{(\phi^{2}-1-\beta)^{2}}{4\epsilon^{2}}
-
\frac{\beta^{2}+2\beta}{4\epsilon^{2}}
\right]\,dx.
\end{equation}
Accordingly, we set
\begin{equation}\label{eq:shifted-lambda-potential}
\lambda:=\frac{\beta}{\epsilon^2},
\qquad
F(v):=
\frac{(v^2-1-\beta)^2}{4\epsilon^2}
-
\frac{\beta^2+2\beta}{4\epsilon^2}.
\end{equation}
It follows that \(\mathcal L=-\Delta+\lambda I\), with \(\lambda>0\), and
\[
E(\phi)
=
\int_\Omega
\left(
\frac12\phi\mathcal L\phi+F(\phi)
\right)\,dx.
\]
The constant term in \(F\) does not affect the variational derivative and hence leaves the dynamics unchanged.
Therefore, without loss of generality, we may assume \(\lambda>0\) in the subsequent analysis.
\iffalse
This subsection establishes error estimates for the proposed scheme by comparing the semi-discrete numerical solution with the exact solution of Eq.~\eqref{eq:gf}. 
To clarify the presentation and highlight key ideas, we focus on the Allen--Cahn type equation (i.e., $\mathcal{G}=I$ and $\mathcal{L}=-\Delta +\lambda I$) and the BDF2-SAV scheme \eqref{eq:savbdf2}. 
The resulting bounds explicitly characterize the dependence of accuracy on the time step size, the regularity of the exact solution, the training hyperparameters, and the least-squares residual.
For the standard Ginzburg--Landau free energy
\[
E[\phi]=\int_{\Omega}\left(\frac12|\nabla\phi|^{2}+\frac{(\phi^{2}-1)^{2}}{4\epsilon^{2}}\right)\,dx,
\]
the linear operator $\mathcal{L}$ in the corresponding Allen--Cahn equation has \(\lambda=0\). Nevertheless, for any fixed \(\beta>0\), the same energy can be equivalently rewritten by completing the square as
\begin{equation*}
\begin{aligned}
E[\phi]&=\int_{\Omega}\left(\frac{|\nabla\phi|^{2}}{2}+\frac{\beta}{2\epsilon^{2}}\phi^{2}\right.\\
&\quad\left. +\frac{(\phi^{2}-1-\beta)^{2}}{4\epsilon^{2}}
-\frac{\beta^{2}+2\beta}{4\epsilon^{2}}\right)\,dx.
\end{aligned}
\end{equation*}
Since the last term is a constant independent of \(\phi\), this reformulation does not change the variational derivative (and hence the dynamics), while it introduces a strictly positive quadratic contribution. Therefore, without loss of generality, we may assume \(\lambda>0\) in the subsequent analysis.
\fi

Throughout this subsection, we assume that the nonlinear force $F'$ satisfies the following conditions.
\begin{assumption}[Polynomial growth assumptions on $F'$]\label{assump:F-smooth}
Let $F \in C^{3}(\mathbb{R})$. There exist constants $C>0$ and exponents $p,p'>0$ such that, for all $x\in\mathbb{R}$,
\begin{align*}
|F''(x)| &\le C\bigl(|x|^{p}+1\bigr), \quad 
\begin{cases}
p>0 \ \text{arbitrary}, & d=1,2,\\
0<p<4, & d=3,
\end{cases}\\ \nonumber
|F'''(x)| &\le C\bigl(|x|^{p'}+1\bigr), \quad 
\begin{cases}
p'>0 \ \text{arbitrary}, & d=1,2,\\
0<p'<3, & d=3,
\end{cases}
\end{align*}
where \(d\) denotes the spatial dimension of Eq.~\eqref{eq:gf}.
\end{assumption}
These assumptions are standard in phase-field analysis and weaker than a
global Lipschitz condition on $F'$. In particular, they include the
double-well potential and support the standard well-posedness and convergence
arguments~\cite{temam2012infinite}.

To quantify the semi-discretization error, we state the following error estimates without assuming that $F'$ is globally Lipschitz continuous.
\begin{lemma}[Temporal discretization error estimate, Ref.~\cite{shen2018convergence}]\label{lem:error-semi-discrete}
Let $T>0$ and $t_n=n\,\delta t$ for $n\ge 0$. Define
\[
e^{n}:=\phi^{n}-\phi(t_n),\qquad s^{n}:=r^{n}-r(t_n).
\]
Under Assumption~\ref{assump:F-smooth}, assume that $\phi^0\in H^3(\Omega)$ and
\[
\phi \in L^{\infty}(0,T; W^{1,\infty}), \quad
\phi_t \in L^{\infty}(0,T; H^1), \]
\[ 
\phi_{tt} \in L^2(0,T; H^3), \quad 
\phi_{ttt} \in L^2(0,T; L^2).
\]
Then there exists a constant $C>0$, independent of $\delta t$, such that for all $0\le n\le \left\lfloor \frac{T}{\delta t} \right\rfloor$,
$$\lambda\|e^n\|^2+\|\nabla e^n\|^2+(s^n)^2 \le C\,\delta t^{4}.$$
Here $C$ may depend on $T$, $\Omega$, $\phi^0$, and $\phi$.
\end{lemma}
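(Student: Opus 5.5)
We follow the standard SAV convergence argument of Shen--Xu and adapt it to the BDF2 setting with \(\mathcal G=I\) and \(\mathcal L=-\Delta+\lambda I\).

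\textbf{Step 1: error equations and a priori bounds.} Subtract the exact equations~\eqref{eq:sav}, evaluated at \(t_{n+1}\), from the scheme~\eqref{eq:savbdf2}. This gives
\[
\delta_2 e^{n+1} = -2\delta t\bigl(\mathcal L e^{n+1} + r^{n+1}h(\phi^{n+\frac12}) - r(t_{n+1})h(\phi(t_{n+1}))\bigr) + T_\phi^{n+1}
\]
and an analogous equation for \(\delta_2 s^{n+1}\) with truncation error \(T_r^{n+1}\). Taylor expansion with integral remainder bounds both truncation errors by \(C\delta t^{5}\int_{t_{n-1}}^{t_{n+1}}(\|\phi_{ttt}\|^2+\|\phi_{tt}\|_{H^1}^2)\,dt\) in squared norm; this uses \(\phi_{ttt}\in L^2(L^2)\), \(\phi_{tt}\in L^2(H^3)\), and the analogous regularity of \(r\) inherited from \(\phi\). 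Theorem~\ref{thm:energy-stability} gives a uniform bound on \(\bar E^n\). Hence \(\|\phi^n\|_{H^1}\) and \(|r^n|\) are bounded independently of \(\delta t\), since \(\lambda>0\) makes \(\|\cdot\|_{\mathcal L}\) equivalent to the \(H^1\) norm.

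\textbf{Step 2: energy estimate.} Test the \(\phi\)-error equation with \(\mathcal L e^{n+1}\). Multiply the \(s\)-error equation by \(2 s^{n+1}\) times a suitable factor chosen so that the leading nonlinear cross terms cancel, as in the proof of Theorem~\ref{thm:residual-energy-law}. Applying the BDF2 identity then gives a telescoping quantity of the form \(\|e^{n+1}\|_{\mathcal L}^2+\|2e^{n+1}-e^n\|_{\mathcal L}^2 + (s^{n+1})^2+(2s^{n+1}-s^n)^2\), plus dissipation \(2\delta t\|\mathcal L e^{n+1}\|^2\). The remaining nonlinear terms are
\[
r(t_{n+1})\bigl(h(\phi^{n+\frac12})-h(\phi(t_{n+1}))\bigr)\quad\text{and}\quad s^{n+1}h(\phi^{n+\frac12}),
\]
together with the corresponding terms in the \(r\)-equation.

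\textbf{Step 3: bounding the nonlinear differences (main obstacle).} Since \(F'\) is not globally Lipschitz, I would control \(\|h(\phi^{n+\frac12})-h(\phi(t_{n+1}))\|_{H^1}\) by \(C(\|e^n\|_{H^1}+\|e^{n-1}\|_{H^1}+\delta t^2)\). The difficulty is that the constant must be uniform, and this is where Assumption~\ref{assump:F-smooth} enters. Write the difference as \(F''(\xi)\,(\cdot)\) and take gradients, which produces \(F'''(\xi)\nabla\xi\,(\cdot)\) terms. Then apply H\"older and the Sobolev embeddings \(H^1\subset L^6\) (any \(L^q\) when \(d\le2\)). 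The growth exponents \(p<4\) and \(p'<3\) in \(d=3\) are exactly what make the products \(|\xi|^p|e|\) and \(|\xi|^{p'}|\nabla\xi||e|\) integrable. Two bounds make these constants uniform: the \(H^1\) bounds on the numerical solution from Step~1, and the bound \(\phi\in L^\infty(W^{1,\infty})\) on the exact solution. The denominator \(\sqrt{E_1}\ge\sqrt{c_*}\) is handled via the mean value theorem on \(x\mapsto 1/\sqrt{x}\). For terms involving \(\nabla e\) paired with \(\mathcal L e^{n+1}\), I would absorb them into the dissipation \(\delta t\|\mathcal L e^{n+1}\|^2\) using Young's inequality.

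\textbf{Step 4: closing the argument.} Sum over \(n\) and apply the discrete Gronwall lemma, taking \(\delta t\) small enough to absorb the implicit terms. This gives \(\|e^n\|_{\mathcal L}^2+(s^n)^2\le C\delta t^4\). The stated estimate follows from \(\|e\|_{\mathcal L}^2=\|\nabla e\|^2+\lambda\|e\|^2\). The first step, computed by Euler--SAV, produces only a local \(O(\delta t^2)\) error, so it enters the bound without degrading the order. The initial data \(\phi^0\in H^3\) guarantee that this first-step error is of the required size.
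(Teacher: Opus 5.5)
The genuine gap is in Step~3, which is exactly where the cited proof does its real work. You try to make $F'$ effectively Lipschitz using only the $H^1$ bound on $\phi^n$ that the modified energy supplies, and that is not enough.

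\begin{itemize}
\item The gradient of $F'(\phi^{n+\frac12})-F'(\phi(t_{n+1}))$ contains, besides the $F'''$ term you mention, the term $F''(\phi^{n+\frac12})\nabla\bigl(\phi^{n+\frac12}-\phi(t_{n+1})\bigr)$. Bounding it in $L^2$ by $\|e^n\|_{H^1}+\|e^{n-1}\|_{H^1}$ requires $F''(\phi^{n+\frac12})\in L^\infty$ uniformly, i.e.\ an $L^\infty$ bound on the semi-discrete solution. $H^1$ does not give this for $d=2,3$.
\item Even the plain $L^2$ bound on $F''(\xi)\,(\phi^{n+\frac12}-\phi(t_{n+1}))$ follows from $H^1\subset L^6$ and H\"older only when $3p\le 6$. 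So in $d=3$ the range $2<p<4$ is out of reach. The exponent restrictions in Assumption~\ref{assump:F-smooth} are not what make your products integrable.
\end{itemize}

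What is missing is a uniform-in-$\delta t$ $H^2$ bound for the semi-discrete BDF2--SAV sequence, as in \cite{shen2018convergence} and in the appendix Lemma~\ref{lem:H2-estimate}. One tests the scheme with $\Delta^2\phi^{n+1}$ and uses $\|\nabla F'(v)\|^2\le\varepsilon\|\nabla\Delta v\|^2+C_\varepsilon$ for $H^1$-bounded $v$, see \eqref{eq:H2-nonlinear-bound}.
\begin{itemize}
\item That estimate is where $p<4$ and $p'<3$ are actually needed.
\item It is also where $\phi^0\in H^3$ enters, through the term $\delta t\|\nabla\Delta\phi^0\|^2$ of the Euler start-up \eqref{eq:H2-Euler-start}. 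Contrary to your Step~4, $\phi^0\in H^3$ does not control the size of the first-step error.
\end{itemize}
Once $\max_n\|\phi^n\|_{H^2}\le C$, the embedding $H^2\hookrightarrow L^\infty$ and $\phi\in L^\infty(0,T;W^{1,\infty})$ make $F'$ and $F''$ Lipschitz on the relevant range. This gives $\|h(\phi^{n+\frac12})-h(\phi(t_{n+1}))\|\le C(\|e^n\|+\|e^{n-1}\|)$ plus an extrapolation error of order $\delta t^2$, and your Gronwall step then closes.

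Step~2 also needs repair. With $\mathcal Le^{n+1}$ as the test function, no scalar multiple of the $s$-equation cancels the cross terms. The $\phi$-equation produces $s^{n+1}\langle h(\phi^{n+\frac12}),\mathcal Le^{n+1}\rangle$, while the $s$-equation produces $s^{n+1}\langle h(\phi^{n+\frac12}),\delta_2e^{n+1}\rangle$, and these pair $h$ with different functions. The cancellation in Theorem~\ref{thm:residual-energy-law} comes from testing with the chemical potential. You have two ways to fix this.
\begin{itemize}
\item Reproduce the cancellation: test the $\phi$-error equation with the chemical-potential error, and the $\mu$-error relation with $\delta_2 e^{n+1}$, as in the proof of Lemma~\ref{lem:error-accumulation}.
\item Keep $\mathcal Le^{n+1}$, substitute the error equation for $\delta_2e^{n+1}$ into the $s$-equation, and absorb everything into $\|\mathcal Le^{n+1}\|^2$. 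This variant needs only $L^2$ bounds on the $h$-difference. Such bounds do follow from $H^1$ control when $d\le2$, or when $p\le2$ in $d=3$, which covers the double well. They do not follow under the full Assumption~\ref{assump:F-smooth}.
\end{itemize}
In either case, the $H^2$ bound remains the indispensable step.
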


We next quantify the additional error introduced by the numerical implementation.
Due to the semi-discrete-in-time nature of the method, the error at the $(n+1)$-th time step originates from two distinct sources:
\begin{enumerate}
    \item \textbf{Local step solver error}, caused by the finite neural trial space
    and the least-squares solve at finitely many collocation points;
    \item \textbf{Propagation error}, caused by using previously computed
    states as the history data for the current subproblem.
\end{enumerate}
Accordingly, the error evolution can be characterized by a recurrence relation in which the error at the current time step depends on the accumulated errors from previous steps and the local solver error. 

To advance the semi-discrete scheme \eqref{eq:savbdf2} from $t_n$ to
$t_{n+1}$, the numerical implementation solves a linear system. Given the
computed history $(\hat\phi^n,\hat r^n)$ and
$(\hat\phi^{n-1},\hat r^{n-1})$, the numerical solution
$(\hat\phi^{n+1},\hat r^{n+1})$ is obtained by applying the inexact
least-squares solver to the following system:
\begin{equation}\label{eq:system_actual}
\left\{
\begin{aligned}
\bigl(3-2\delta t\Delta+2\delta t\lambda\bigr)\hat\phi^{n+1}
+2\delta t\,\hat r^{n+1}q^{n+\frac12}
&=4\hat\phi^n-\hat\phi^{n-1}
\quad\text{in }\Omega,\\
\hat r^{n+1}-\frac12
\left\langle q^{n+\frac12},\hat\phi^{n+1}\right\rangle
&=\frac13\bigl(4\hat r^n-\hat r^{n-1}\bigr)\\
&\quad-\frac16\left\langle q^{n+\frac12},
4\hat\phi^n-\hat\phi^{n-1}\right\rangle,\\
\mathcal B\hat\phi^{n+1}&=0
\quad\text{on }\partial\Omega.
\end{aligned}
\right.
\end{equation}
where
\[
\begin{aligned}
\hat\phi^{n+\frac12}&:=2\hat\phi^n-\hat\phi^{n-1},\\
q^{n+\frac12}&:=h(\hat\phi^{n+\frac12})
=\frac{F'(\hat\phi^{n+\frac12})}{\sqrt{\int_{\Omega}F(2\hat{\phi}^{n}-\hat{\phi}^{n-1})\D x+C_0}}.
\end{aligned}
\]

For the analysis, let
$(\hat\phi^{n+1,*},\hat r^{n+1,*})$ denote the exact solution of
Eqs.~\eqref{eq:system_actual} formed from the same numerical history.
%The existence and uniqueness of such a solution are guaranteed by the solvability of the SAV linear subsystem.

\begin{lemma}[Stability with respect to inexact linear solves]
\label{lem:error-accumulation}
Fix $T>0$. Let $\{(\phi^n,r^n)\}$ be the exact BDF2--SAV sequence,
initialized by one Euler--SAV step, and let
$\{(\hat\phi^n,\hat r^n)\}$ be the corresponding numerical sequence with
the same initial data. For $j\geq2$, let
$(\hat\phi^{j,*},\hat r^{j,*})$ be the exact solution of the BDF2 linear
subproblem formed from the numerical history and set
\[
\epsilon_j:=\|\hat\phi^j-\hat\phi^{j,*}\|_{H^2}
+|\hat r^j-\hat r^{j,*}|.
\]
Define $\hat e^n:=\hat\phi^n-\phi^n$, $\hat s^n:=\hat r^n-r^n$, and
\[
\mathcal E_{\rm st}:=
\lambda\|\hat e^1\|^2+\|\nabla\hat e^1\|^2+|\hat s^1|^2.
\]
Suppose that Assumption~\ref{assump:F-smooth} holds, $\phi^0\in H^3(\Omega)$,
and, for some $K>0$,
\[
\max_{0\leq j\leq T/\delta t}
\bigl(\|\hat\phi^j\|_{H^2}+|\hat r^j|\bigr)\leq K.
\]
Then there exists $\delta t_0>0$ such that, for $0<\delta t\leq\delta t_0$
and $t_n\leq T$,
\begin{equation}\label{eq:inexact-solve-stability}
\lambda\|\hat e^n\|^2+\|\nabla\hat e^n\|^2+|\hat s^n|^2
\leq C\left(\mathcal E_{\rm st}
+\delta t^{-1}\sum_{j=2}^{n}\epsilon_j^2\right).
\end{equation}
The constant $C$ may depend on $T$, $K$,
and the exact solution, but is independent of $\delta t$ and the local solver
errors.
\end{lemma}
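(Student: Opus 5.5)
We compare the numerical sequence with the exact BDF2--SAV sequence by splitting each step into an exact-solve part and a solver-error part. For $j\ge2$ write
\[
\hat e^{j}=(\hat\phi^{j,*}-\phi^j)+(\hat\phi^j-\hat\phi^{j,*})=:\tilde e^{j}+\eta^j,
\qquad
\hat s^{j}=(\hat r^{j,*}-r^j)+(\hat r^j-\hat r^{j,*})=:\tilde s^{j}+\varsigma^j .
\]
Then $\|\eta^j\|_{H^2}+|\varsigma^j|\le\epsilon_j$. Subtracting the exact scheme~\eqref{eq:savbdf2} from the exact solve of~\eqref{eq:system_actual} gives a linear error equation for $(\tilde e^{n+1},\tilde s^{n+1})$:
\[
3\tilde e^{n+1}-4\hat e^n+\hat e^{n-1}=-2\delta t\bigl(\mathcal L\tilde e^{n+1}+\tilde s^{n+1}q^{n+\frac12}+r^{n+1}(q^{n+\frac12}-h^{n+\frac12})\bigr),
\]
together with an analogous scalar equation for $\tilde s^{n+1}$. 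In both, the history enters through the full errors $\hat e^n,\hat e^{n-1}$. We rewrite $3\tilde e^{n+1}=3\hat e^{n+1}-3\eta^{n+1}$, so the equation becomes the standard BDF2 error equation for $(\hat e,\hat s)$ with an extra forcing $3\eta^{n+1}/(2\delta t)$ in the $\phi$-equation and a corresponding forcing in the $r$-equation. That second forcing involves $\varsigma^{n+1}$ and $\langle q^{n+\frac12},\eta^{n+1}\rangle$.

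\textbf{Energy estimate.} We mimic Theorem~\ref{thm:residual-energy-law}. Test the $\phi$-error equation with $\mathcal L\hat e^{n+1}$, and note that $\|v\|_{\mathcal L}^2=\|\nabla v\|^2+\lambda\|v\|^2$ is exactly the target norm. Multiply the $r$-error equation by $2\hat s^{n+1}$, so that the cross terms $\hat s^{n+1}\langle q^{n+\frac12},\cdot\rangle$ cancel as in the SAV structure. The BDF2 identity then yields a telescoping $G$-norm quantity $\mathcal G^{n+1}-\mathcal G^n$ that is equivalent to $\|\hat e^{n+1}\|_{\mathcal L}^2+|\hat s^{n+1}|^2$, plus the dissipation $2\delta t\|\mathcal L\hat e^{n+1}\|^2$. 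The right-hand side has three kinds of terms.
\begin{enumerate}
\item The nonlinear consistency term $\delta t\,r^{n+1}\langle q^{n+\frac12}-h^{n+\frac12},\mathcal L\hat e^{n+1}\rangle$. By Assumption~\ref{assump:F-smooth}, the bound $\|\hat\phi^j\|_{H^2}\le K$, the exact-solution bounds from Lemma~\ref{lem:error-semi-discrete}, and Sobolev embedding ($H^2\subset L^\infty$ for $d\le3$), the difference $h(\hat\phi^{n+\frac12})-h(\phi^{n+\frac12})$ is Lipschitz in $H^1$ with a constant $C(K)$. The lower bound $E_1\ge c_*$ controls the normalizing square root. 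Young's inequality then bounds this term by $\delta t\|\mathcal L\hat e^{n+1}\|^2+C\delta t(\|\hat e^n\|_{\mathcal L}^2+\|\hat e^{n-1}\|_{\mathcal L}^2)$.
\item The analogous term in the $r$-equation, which we bound the same way.
\item The solver-error forcing, of the form $\langle 3\eta^{n+1},\mathcal L\hat e^{n+1}\rangle$ and $\hat s^{n+1}(\varsigma^{n+1}+\langle q,\eta^{n+1}\rangle)$, with no $\delta t$ factor.
\end{enumerate}

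\textbf{Main obstacle.} The third group is where the $\delta t^{-1}$ appears, and it is the step I expect to be hardest. I would move $\mathcal L$ onto $\eta^{n+1}$, which is legitimate because $\eta\in H^2$ with the boundary condition, and apply a weighted Young inequality:
\[
|\langle\eta^{n+1},\mathcal L\hat e^{n+1}\rangle|\le\|\eta^{n+1}\|_{\mathcal L}\|\hat e^{n+1}\|_{\mathcal L}\le\tfrac{\delta t}{4}\|\hat e^{n+1}\|_{\mathcal L}^2+C\delta t^{-1}\epsilon_{n+1}^2 .
\]
The scalar terms are handled the same way, using $\|q\|\le C(K)$. This puts a factor $\delta t$ in front of the current-level error, where it can be absorbed, at the price of $\delta t^{-1}\epsilon_{n+1}^2$. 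Alternatively, the term $\langle 3\eta,\mathcal L\hat e\rangle$ can be absorbed into the dissipation $\delta t\|\mathcal L\hat e^{n+1}\|^2$ via $\|\eta\|\,\|\mathcal L\hat e\|$, which gives the same rate.

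\textbf{Summation and Gronwall.} Sum the resulting inequality from $n=1$. The initial $G$-norm is bounded by $\mathcal E_{\rm st}$ together with the level-$0$ error, which vanishes because the initial data coincide. This gives
\[
\mathcal G^{n}\le C\mathcal E_{\rm st}+C\delta t\sum_{k\le n}\mathcal G^{k}+C\delta t^{-1}\sum_{j=2}^{n}\epsilon_j^2 .
\]
For $\delta t\le\delta t_0$ small enough that $C\delta t<1/2$, the discrete Gronwall lemma yields~\eqref{eq:inexact-solve-stability}, with a constant of the form $e^{CT}$ that depends only on $T$, $K$, and the exact solution. The smallness of $\delta t_0$ is needed only for this absorption step.
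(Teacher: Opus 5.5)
\textbf{There is a genuine gap in the energy step.} The overall structure matches the paper's: solver defects enter the error equations as forcing, followed by an SAV energy test, the BDF2 $G$-norm identity, a $\delta t^{-1}$-weighted Young inequality on the defects, and discrete Gronwall. The problem is your test function.

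Testing the $\phi$-error equation with $\mathcal L\hat e^{n+1}$ produces the coupling term $-2\delta t\,\hat s^{n+1}\langle q^{n+\frac12},\mathcal L\hat e^{n+1}\rangle$. Multiplying the $r$-error equation by $2\hat s^{n+1}$ produces $\hat s^{n+1}\langle q^{n+\frac12},3\hat e^{n+1}-4\hat e^n+\hat e^{n-1}\rangle$. These do not cancel. The second term also carries no factor $\delta t$: estimated directly, it is of the size of your $G$-norm energy itself, so Gronwall would only give growth like $e^{CT/\delta t}$. Your list of right-hand-side terms simply omits it.

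The SAV cancellation requires a different pairing, which is what the paper does:
\begin{itemize}
\item test the $\phi$-equation with the full chemical-potential error $\hat w^{n+1}=\mathcal L\hat e^{n+1}+\hat s^{n+1}q^{n+\frac12}+r^{n+1}(q^{n+\frac12}-h^{n+\frac12})$;
\item pair the defining relation of $\hat w^{n+1}$ with $3\hat e^{n+1}-4\hat e^n+\hat e^{n-1}$.
\end{itemize}
The dissipation is then $2\delta t\|\hat w^{n+1}\|^2$, not $2\delta t\|\mathcal L\hat e^{n+1}\|^2$.

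Equivalently, you can keep your test function and substitute the $\phi$-error equation for $3\hat e^{n+1}-4\hat e^n+\hat e^{n-1}$ inside the uncancelled term. The two coupling terms then combine with $2\delta t\|\mathcal L\hat e^{n+1}\|^2$ and the new term $2\delta t|\hat s^{n+1}|^2\|q^{n+\frac12}\|^2$ into the perfect square $2\delta t\|\mathcal L\hat e^{n+1}+\hat s^{n+1}q^{n+\frac12}\|^2$. The consistency and defect terms then appear paired with $\mathcal L\hat e^{n+1}+\hat s^{n+1}q^{n+\frac12}$ and are absorbed into that square.

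\textbf{Three smaller steps need more than you give.}
\begin{enumerate}
\item The $r$-equation consistency term $\hat s^{n+1}\langle q^{n+\frac12}-h^{n+\frac12},3\phi^{n+1}-4\phi^n+\phi^{n-1}\rangle$ is not bounded ``the same way'' as the $\phi$-term. Its factor $\delta t$ comes only from $3\phi^{n+1}-4\phi^n+\phi^{n-1}=-2\delta t\,\mu^{n+1}$ together with a uniform $L^2$ bound on $\mu^{n+1}$.
\item That $L^2$ bound, and the Lipschitz estimate $\|q^{n+\frac12}-h^{n+\frac12}\|\le C(\|\hat e^n\|+\|\hat e^{n-1}\|)$, need $L^\infty$ control of the exact semi-discrete iterates. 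The paper takes this from the uniform $H^2$ bound of Lemma~\ref{lem:H2-estimate}. Lemma~\ref{lem:error-semi-discrete} only gives $H^1$-closeness to $\phi(t_n)$, which is insufficient in general (e.g.\ $d=3$, $p>2$).
\item Converting $\tilde e^{n+1},\tilde s^{n+1}$ on the right-hand side also yields the forcing $2\delta t(\mathcal L\eta^{n+1}+\varsigma^{n+1}q^{n+\frac12})$, which you omitted. It is harmless because $\|\mathcal L\eta^{n+1}\|\le C\epsilon_{n+1}$; this is where the $H^2$ norm in $\epsilon_j$ is needed.
\end{enumerate}
With these repairs your argument coincides with the paper's proof.
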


The start-up requirement $\mathcal E_{\rm st}=O(\delta t^4)$ is obtained when
the Euler--SAV linear system is solved with $O(\delta t^2)$ accuracy in
$H^2(\Omega)\times\mathbb R$. Lemma~\ref{lem:error-accumulation} then shows
how the subsequent BDF2 solver errors accumulate. Its proof is given in
Appendix~\ref{sec:proof error accumulation}.

We next combine the deterministic stability of one linear subproblem with an
explicit sampling estimate for the fixed random basis used by
Algorithm~\ref{alg:evo}.

\begin{lemma}[High-probability $H^2$ error bound for the least-squares subproblem]
\label{lem:solver-error}
Assume that:
\begin{enumerate}
\item Assumption~\ref{assump:F-smooth} holds, $d\leq3$,
$0<\delta t\leq1$, $0<\eta<1$, and $M,N\in\mathbb N$;
\item $\Omega$ is a periodic box or a bounded $C^{1,1}$ domain with homogeneous
Neumann boundary conditions, with boundary-conforming space
$H^2_{\mathcal B}(\Omega)$;
\item $\{\varphi_j\}_{j=1}^M\subset H^2_{\mathcal B}(\Omega)$ is fixed and,
with
\[
\begin{gathered}
\Phi(x):=(\varphi_1(x),\ldots,\varphi_M(x))^{\mathsf T},\\
K_0:=\sup_{x\in\Omega}\|\Phi(x)\|_2,
\qquad K_2:=\sup_{x\in\Omega}\|\Delta\Phi(x)\|_2,
\end{gathered}
\]
one has $0<K_0<\infty$ and $K_2<\infty$.
\end{enumerate}

Fix $B>0$, and let $a,b\in\mathbb R^M$ and
$c=(c_\phi,c_r)\in\mathbb R^{M+1}$ satisfy
\[
\|a\|_2,\ \|b\|_2,\ \|c\|_2\leq B.
\]
Define
\[
u_a:=a^{\mathsf T}\Phi,\qquad v_b:=b^{\mathsf T}\Phi,\qquad
\psi_{a,b}:=h(2u_a-v_b),\qquad f_{a,b}:=4u_a-v_b.
\]
For $\gamma\in\mathbb R$, consider
\begin{equation}\label{eq:system}
\begin{aligned}
\mathcal L_{\delta t}\phi+2\delta t\,r\psi_{a,b}&=f_{a,b}
\quad \text{in }\Omega,\\
r-\tfrac12\langle\psi_{a,b},\phi\rangle
&=\gamma-\tfrac16\langle\psi_{a,b},f_{a,b}\rangle,
\end{aligned}
\end{equation}
where $\mathcal L_{\delta t}:=(3+2\delta t\lambda)I-2\delta t\Delta$ and
all inner products are evaluated exactly.

Set $\phi_c:=c_\phi^{\mathsf T}\Phi$ and $r_c:=c_r$. Let
$X_1,\ldots,X_N$ be i.i.d. uniform samples from $\Omega$, and define
\begin{align}
\mathcal R_c(x;a,b)
&:=\mathcal L_{\delta t}\phi_c(x)
+2\delta t\,r_c\psi_{a,b}(x)-f_{a,b}(x),\nonumber\\
\mathcal J_N(c;a,b,\gamma)
&:=\left|r_c-\tfrac12\langle\psi_{a,b},\phi_c\rangle-\gamma
+\tfrac16\langle\psi_{a,b},f_{a,b}\rangle\right|^2\nonumber\\
&\quad+\frac1N\sum_{i=1}^N|\mathcal R_c(X_i;a,b)|^2,
\label{eq:empirical-loss}
\end{align}
Then:
\begin{enumerate}
\item for every admissible $a,b,\gamma$, system~\eqref{eq:system} has a unique
solution $(\phi^*,r^*)\in H^2_{\mathcal B}(\Omega)\times\mathbb R$;
\item there exist a universal constant $C_{\mathrm{ent}}>0$ and a constant
$C>0$, independent of $N$, $\eta$, and $\delta t$ and depending only on
$B$, $\Omega$, $F$, $\lambda$, $c_*$, and the fixed basis, such that, with
probability at least $1-\eta$, simultaneously for all admissible
$a,b,c,\gamma$,
\begin{equation}\label{eq:solver-generalization}
\|\phi_c-\phi^*\|_{H^2}+|r_c-r^*|
\leq C\delta t^{-1}
\sqrt{\mathcal J_N(c;a,b,\gamma)+\Xi_{N,\eta}}.
\end{equation}
\end{enumerate}
Here
\begin{equation}\label{eq:explicit-generalization-gap}
\begin{aligned}
\Xi_{N,\eta}:={}&C_{\mathrm{ent}}G_B^2
\left[1+\sqrt{\log\left(1+
\frac{4\sqrt3\,B L_B}{G_B}\right)}\right]
\sqrt{\frac{3M+1}{N}}\\
&+2\sqrt2\,G_B^2\sqrt{\frac{\log(2/\eta)}{N}},
\end{aligned}
\end{equation}
where
\[
\begin{aligned}
K_A&:=(3+2\delta t\lambda)K_0+2\delta t K_2,
&R_B&:=3BK_0,\\
M_\ell&:=\sup_{|z|\leq R_B}|F^{(\ell)}(z)|,
&\ell&=1,2,\\
H_B&:=\frac{M_1}{\sqrt{c_*}},
&L_{H,B}&:=\frac{M_2}{\sqrt{c_*}}
+\frac{|\Omega|M_1^2}{2c_*^{3/2}},\\
G_B&:=BK_A+2\delta t\,BH_B+5BK_0>0.
\end{aligned}
\]
and
\[
L_B^2:=K_A^2+(2\delta t H_B)^2
+\bigl[(4\delta t B L_{H,B}+4)K_0\bigr]^2
+\bigl[(2\delta t B L_{H,B}+1)K_0\bigr]^2.
\]
\end{lemma}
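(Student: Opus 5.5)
The proof has three parts: deterministic well-posedness with an a priori bound, a deterministic stability estimate in terms of the population loss, and a uniform concentration bound over the parameter ball.

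\textbf{Well-posedness.} For part 1, eliminate $r$ from the second equation of \eqref{eq:system}:
\[
r=\tfrac12\langle\psi,\phi\rangle+\tilde\gamma,
\qquad \tilde\gamma:=\gamma-\tfrac16\langle\psi,f\rangle .
\]
Substituting into the first equation gives the rank-one perturbed problem
\[
\mathcal L_{\delta t}\phi+\delta t\,\psi\langle\psi,\phi\rangle=f-2\delta t\tilde\gamma\psi .
\]
The operator on the left is self-adjoint and coercive on $H^1$. Indeed,
\[
\langle \mathcal L_{\delta t}\phi+\delta t\psi\langle\psi,\phi\rangle,\phi\rangle
\ge 3\|\phi\|^2+2\delta t\|\nabla\phi\|^2 ,
\]
so Lax--Milgram gives a unique weak solution. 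Elliptic regularity then puts $\phi^*\in H^2_{\mathcal B}$: this is immediate in the periodic case, and follows from the $C^{1,1}$ Neumann assumption otherwise. Since $|\langle\psi,\cdot\rangle|$ is bounded, $\psi_{a,b}$ is bounded in $L^\infty$ by $H_B$, using $\|2u_a-v_b\|_\infty\le R_B$ and $E_1\ge c_*$.

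\textbf{Deterministic stability.} Apply the same elimination to the error $(\phi_c-\phi^*,r_c-r^*)$. Its data are the residual $\mathcal R_c$ in the PDE and the scalar residual $\rho$ in the SAV equation. Testing with the error and using the coercivity above bounds the $L^2$ norm of the error and $|r_c-r^*|$ by $C(\|\mathcal R_c\|_{L^2}+|\rho|)$.

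For $H^2$ control, write
\[
-2\delta t\Delta e=\mathcal R_c-(3+2\delta t\lambda)e-2\delta t(r_c-r^*)\psi .
\]
Elliptic regularity gives $\|e\|_{H^2}\le C\delta t^{-1}(\|\mathcal R_c\|+|\rho|)$, where the constants use $\|\psi\|_\infty\le H_B$ and $\delta t\le1$. Consequently
\[
\|\phi_c-\phi^*\|_{H^2}+|r_c-r^*|\le C\delta t^{-1}\sqrt{|\rho|^2+|\Omega|^{-1}\cdot|\Omega|\,\mathbb E|\mathcal R_c(X)|^2} .
\]
Here the continuous $L^2$ norm squared equals $|\Omega|$ times the expectation of $|\mathcal R_c(X)|^2$ under the uniform distribution; the factor $|\Omega|$ is absorbed into $C$. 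It therefore remains to show that, uniformly over the parameters,
\[
\mathbb E|\mathcal R_c(X;a,b)|^2\le \frac1N\sum_i|\mathcal R_c(X_i;a,b)|^2+\Xi_{N,\eta}.
\]

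\textbf{Uniform concentration.} This is the main obstacle. Consider the function class
\[
\{x\mapsto|\mathcal R_c(x;a,b)|^2\}
\]
indexed by $(a,b,c)$ in the product of three $B$-balls. Its parameter dimension is $3M+1$, which explains the $\sqrt{(3M+1)/N}$ factor in $\Xi_{N,\eta}$. The plan is to do three things.
\begin{enumerate}
\item Show $|\mathcal R_c(x)|\le G_B$ pointwise. The three terms are bounded by $BK_A$ (from $\mathcal L_{\delta t}\phi_c$), by $2\delta t BH_B$ (from $2\delta t r_c\psi$), and by $5BK_0$ (from $f=4u_a-v_b$). Hence the class is bounded by $G_B^2$.
\item Show $\mathcal R_c$ is Lipschitz in the parameters with constant $L_B$ in the $\ell_2$ norm on $\mathbb R^{3M+1}$. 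Differentiate with respect to $c_\phi$ (giving $K_A$), $c_r$ (giving $2\delta tH_B$), $a$ and $b$. For $a$ and $b$, use the Lipschitz bound $L_{H,B}$ of $h$, which comes from the $M_2$ term plus the derivative of the normalization $E_1^{-1/2}$. Combining the terms in quadrature gives $L_B$, and the square loss is then $2G_BL_B$-Lipschitz.
\item Bound the Rademacher complexity by Dudley's entropy integral, using the covering numbers of a ball in $\mathbb R^{3M+1}$ of radius $\sqrt3B$. This yields the $C_{\mathrm{ent}}G_B^2[1+\sqrt{\log(1+4\sqrt3BL_B/G_B)}]\sqrt{(3M+1)/N}$ term. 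McDiarmid's inequality with bounded differences $G_B^2/N$ then contributes $2\sqrt2G_B^2\sqrt{\log(2/\eta)/N}$ with probability $1-\eta$.
\end{enumerate}

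Finally, combine this uniform bound with the deterministic stability estimate from the previous step to obtain \eqref{eq:solver-generalization}. The delicate points are tracking the constants exactly, especially the Lipschitz dependence through the nonlocal normalization in $h$, and keeping the elliptic constant independent of $\delta t$ apart from the explicit $\delta t^{-1}$ factor.
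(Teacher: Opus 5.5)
Your proposal is correct and follows the paper's proof essentially step for step. The shared steps are: eliminating $r$ and applying Lax--Milgram to the rank-one-perturbed coercive form; bounding $\|\phi_c-\phi^*\|+|r_c-r^*|$ by the two residuals and then recovering $H^2$ control from the identity for $2\delta t\,\Delta e$, which is where the $\delta t^{-1}$ enters; and a uniform sampling bound over the $(3M+1)$-dimensional ball of radius $\sqrt3B$, built from the envelope $G_B$, the Lipschitz constant $L_B$, Dudley's entropy integral, and a bounded-differences concentration step. The remaining differences are cosmetic: you obtain the concentration from McDiarmid plus symmetrization, whereas the paper cites the uniform Rademacher bound of Shin et al.; the paper also notes that the supremum is measurable because the residual class is continuously parametrized by a compact set, a remark worth adding to your version.
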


The loss \(\mathcal J_N\) denotes the normalized residual used in the analysis. Moreover, up to a logarithmic factor,
\[
\Xi_{N,\eta}
=O\!\left(
G_B^2\sqrt{\frac{M+\log(1/\eta)}{N}}
\right).
\]
Here, the random basis is treated as fixed, and the probability is taken only over the collocation points. The proof is provided in Appendix~\ref{sec:proof solver error}.

Combining Lemmas~\ref{lem:error-semi-discrete}--\ref{lem:solver-error} gives
the following global estimate.

\begin{theorem}[Conditional high-probability global error estimate]
\label{thm:error}
Fix $T>0$, $0<\eta<1$, and
$0<\delta t\leq\min\{1,\delta t_0\}$, and set
$N_T:=\lfloor T/\delta t\rfloor\geq2$. Assume that:
\begin{enumerate}
\item the Allen--Cahn problem is posed on a domain covered by
Lemma~\ref{lem:solver-error};
\item the hypotheses of Lemma~\ref{lem:error-semi-discrete} and the
initialization hypotheses of Lemma~\ref{lem:error-accumulation} hold, with
$\mathcal E_{\rm st}\leq C_{\rm st}\delta t^4$ for a fixed $C_{\rm st}>0$;
\item the fixed random basis is boundary-conforming, the SAV integrals are
evaluated exactly, and one fixed set of i.i.d. uniform collocation points is
used at every BDF2 step;
\item the computed coefficients $c^k=(c_\phi^k,c_r^k)$ satisfy
$\|c^k\|_2\leq B$ for $0\leq k\leq N_T$.
\end{enumerate}
For $2\leq k\leq N_T$, let $\mathcal J_N^k$ be the loss
\eqref{eq:empirical-loss} of the subproblem producing
$(\hat\phi^k,\hat r^k)$. Then there exists an event $\mathcal A_{N,\eta}$ with
$\mathbb P(\mathcal A_{N,\eta})\geq1-\eta$ such that, on
$\mathcal A_{N,\eta}$, uniformly over all computed trajectories satisfying
the assumptions above,
\begin{equation}\label{eq:global-probability-bound}
\begin{aligned}
&\max_{1\leq n\leq N_T}
\left(\lambda\|\hat\phi^n-\phi(t_n)\|^2
+\|\nabla(\hat\phi^n-\phi(t_n))\|^2
+|\hat r^n-r(t_n)|^2\right)\\
&\qquad\leq C\left[
\delta t^4+\delta t^{-3}\sum_{k=2}^{N_T}
\bigl(\mathcal J_N^k+\Xi_{N,\eta}\bigr)\right].
\end{aligned}
\end{equation}
Moreover,
\[
\max_{2\leq k\leq N_T}
\bigl(\mathcal J_N^k+\Xi_{N,\eta}\bigr)
\leq C_{\rm tol}\delta t^8
\]
implies
\[
\max_{1\leq n\leq N_T}
\left(\|\hat\phi^n-\phi(t_n)\|_{H^1}
+|\hat r^n-r(t_n)|\right)
\leq C\delta t^2.
\]
The constant $C$ is independent of $N$, $\eta$, and $\delta t$.
\end{theorem}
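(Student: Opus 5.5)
The plan is to split the error with the triangle inequality, $\hat\phi^n-\phi(t_n)=(\hat\phi^n-\phi^n)+(\phi^n-\phi(t_n))$ and likewise for $r$. We then estimate the two pieces separately: the first by Lemma~\ref{lem:error-accumulation} combined with Lemma~\ref{lem:solver-error}, and the second by Lemma~\ref{lem:error-semi-discrete}. Since $(a+b)^2\le 2a^2+2b^2$, the squared energy-type norm of the total error is at most twice the sum of the two contributions. Lemma~\ref{lem:error-semi-discrete} immediately gives $\lambda\|e^n\|^2+\|\nabla e^n\|^2+(s^n)^2\le C\delta t^4$ uniformly in $n\le N_T$, which accounts for the $\delta t^4$ term.

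\textbf{The event.} We define $\mathcal A_{N,\eta}$ to be the event of Lemma~\ref{lem:solver-error}, part 2. This event depends only on the single fixed set of collocation points, and the lemma's bound holds simultaneously for all admissible $a,b,c,\gamma$. This uniformity is what makes the argument work: the histories $a=c_\phi^{k-1}$, $b=c_\phi^{k-2}$ and $\gamma=\frac13(4\hat r^{k-1}-\hat r^{k-2})$ are themselves random, because they depend on the sample points through the earlier solves. A pointwise-in-$(a,b)$ bound followed by a union over $k$ would therefore be illegitimate, and the simultaneous bound avoids the issue.

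\textbf{Local errors on $\mathcal A_{N,\eta}$.} Assumption 4 gives $\|a\|_2,\|b\|_2,\|c\|_2\le B$. We identify system~\eqref{eq:system_actual}, after dividing its first equation appropriately, with~\eqref{eq:system}, so that $(\phi^*,r^*)=(\hat\phi^{k,*},\hat r^{k,*})$. Lemma~\ref{lem:solver-error} then yields
\[
\epsilon_k\le C\delta t^{-1}\sqrt{\mathcal J_N^k+\Xi_{N,\eta}},
\qquad\text{hence}\qquad
\epsilon_k^2\le C\delta t^{-2}\bigl(\mathcal J_N^k+\Xi_{N,\eta}\bigr).
\]

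\textbf{Accumulation.} Lemma~\ref{lem:error-accumulation} requires the a priori bound $\max_j(\|\hat\phi^j\|_{H^2}+|\hat r^j|)\le K$. We obtain it from the coefficient bound: $\|\hat\phi^j\|_{H^2}\le C(K_0,K_2,\Omega)\,\|c_\phi^j\|_2\le CB$, using the fixed basis in $H^2_{\mathcal B}$, where the $H^2$ norm is controlled by $L^2$ and $\Delta$ via elliptic regularity on the $C^{1,1}$ or periodic domain, or simply by the finitely many basis norms. Also $|\hat r^j|\le B$. So $K$ depends only on $B$ and the basis. Substituting the local bounds into~\eqref{eq:inexact-solve-stability} together with $\mathcal E_{\rm st}\le C_{\rm st}\delta t^4$ gives
\[
C\Bigl(\delta t^4+\delta t^{-3}\sum_{k}\bigl(\mathcal J_N^k+\Xi_{N,\eta}\bigr)\Bigr)
\]
for the discrete-to-numerical error. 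The case $n=1$ is covered directly by $\mathcal E_{\rm st}$. Combining with the semi-discrete estimate proves~\eqref{eq:global-probability-bound}.

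\textbf{Corollary.} If each summand is at most $C_{\rm tol}\delta t^8$, then the sum over at most $N_T\le T/\delta t$ terms is at most $C_{\rm tol}T\delta t^7$. Multiplying by $\delta t^{-3}$ gives $O(\delta t^4)$. Since $\lambda>0$, the left side of~\eqref{eq:global-probability-bound} dominates $\min(\lambda,1)\|\cdot\|_{H^1}^2+|\cdot|^2$, and taking square roots yields the stated $O(\delta t^2)$ bound.

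\textbf{Main obstacle.} The delicate step is the probabilistic one: the random histories feeding each subproblem are exactly why the uniform-in-$(a,b,c,\gamma)$ statement of Lemma~\ref{lem:solver-error} is essential. A second point to check is that the constants $C$ and $\delta t_0$ of Lemma~\ref{lem:error-accumulation} depend on $K$, and hence only on $B$ and the basis, and not on $N$ or $\eta$. This holds because $K$ is deterministic given assumption 4.
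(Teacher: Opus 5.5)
Your proposal is correct and follows the paper's proof essentially step for step. The common steps are: the triangle-inequality split against the semi-discrete solution; the single uniform event of Lemma~\ref{lem:solver-error}, applied at step $k$ with $a=c_\phi^{k-1}$, $b=c_\phi^{k-2}$, $\gamma=\frac13(4\hat r^{k-1}-\hat r^{k-2})$, $c=c^k$, to get $\epsilon_k^2\le C\delta t^{-2}(\mathcal J_N^k+\Xi_{N,\eta})$; substitution into Lemma~\ref{lem:error-accumulation}; and the count $N_T\le T/\delta t$ for the $O(\delta t^2)$ corollary. Your explicit derivation of the a priori bound $K$ from $\|c^k\|_2\le B$ and the fixed basis fills in a detail the paper leaves implicit. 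One small correction: no division is needed to identify \eqref{eq:system_actual} with \eqref{eq:system}, since the two systems coincide term by term.
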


\begin{remark}[Sampling requirement]\label{rem:sampling}
A sufficient condition for retaining the second-order accuracy is
\[
\max_{2\leq k\leq N_T}\mathcal J_N^k
\leq \frac12 C_{\rm tol}\delta t^8,
\qquad
\Xi_{N,\eta}\leq \frac12 C_{\rm tol}\delta t^8.
\]
Together with \eqref{eq:explicit-generalization-gap}, these conditions give an
explicit lower bound on the sample size \(N\). The resulting bound is generally
conservative because it is based on a worst-case uniform Rademacher estimate
over the entire coefficient class and all admissible histories. If only
first-order convergence is required, the tolerance may be relaxed, resulting
in a smaller lower bound on \(N\). Sharper sample-complexity estimates based on
localized complexity or additional stability properties are left for future
work.
\end{remark}

\begin{remark}[Cahn--Hilliard extension]\label{rem:ch-extension}
The same strategy extends to the Cahn--Hilliard equation by replacing
the Allen--Cahn stability step with its $H^{-1}$ counterpart. To keep the
presentation focused, the analogous derivation is omitted and no separate
Cahn--Hilliard global error theorem is stated.
\end{remark}

\begin{remark}[Irregular domains]\label{rem:ir-domain}
The analysis applies to bounded domains, including perforated domains, provided that the
elliptic regularity and boundary compatibility needed for the \(H^2\) stability
estimates hold.
\end{remark}

\begin{remark}[Boundary and omitted errors]\label{rem:additional-errors}
When boundary conditions are imposed approximately, the resulting error can be
included in Lemma~\ref{lem:solver-error} through an appropriate boundary
residual, together with the corresponding trace-stability and sampling
estimates. Since Lemma~\ref{lem:error-accumulation} also assumes exact boundary
compatibility, its deterministic stability estimate must be augmented by a
boundary lifting or suitable boundary-work terms. If this extension controls
the local error in the same \(H^2(\Omega)\times\mathbb R\) norm, the subsequent
time-accumulation argument remains unchanged. Numerical quadrature and other
implementation errors are beyond the scope of this analysis.
\end{remark}

\section{Experiments}
\subsection{Examples and Setups}
In this work, we systematically evaluate the proposed method on the Allen--Cahn and Cahn--Hilliard equations. 
We design four test cases, including 1D problems, 2D problems on a regular domain, and 2D problems on an irregular domain and 3D problems on an irregular domain. The detailed setups of these target examples 
are summarized in Examples~\ref{ex:1}-\ref{ex:6} as follows. We employ a feedforward neural network with the inner-layer parameters fixed. 
Detailed configurations can be found in Appendix~\ref{sec:model}. In solving the least-squares problems, an 
$L^2$ penalty with coefficient $\mathrm{1e}\text{-20}$ is explicitly introduced to mitigate overfitting. By default, 
we measure accuracy using the relative $L^2$ error at the final time, if not specified otherwise. For brevity, we place the visualization results in the Appendix~\ref{sec:visualresults}. In this section, we mainly discuss the properties demonstrated by our method in these examples.

\begin{example}
\label{ex:1} We consider the following 1D Allen--Cahn equation
\begin{equation}
  \frac{\partial \phi}{\partial t} = \epsilon^2 \frac{\partial^2 \phi}{\partial x^2} - 5(\phi^3 - \phi), 
\end{equation}
subjected to periodic boundary condition with $x \in \Omega = [-1, 1]$, $\epsilon=0.01$, $T=1$ and $\phi_0=x^2\cos(\pi x)$.
\end{example}

\begin{example}\label{ex:2}
We consider the following 1D Cahn--Hilliard equation
\begin{equation}
  \frac{\partial \phi}{\partial t} = - \frac{\partial^4 \phi}{\partial x^4} + \frac{1}{\epsilon^2}\frac{\partial^2(\phi^3-\phi)}{\partial x^2},
\end{equation}
with periodic boundary condition, $\Omega=[0,2\pi]$, $\epsilon=0.1$, $T=0.04$ and $\phi_0=0.5\sin(x)$.
\end{example}

\begin{example}\label{ex:3}
We consider the following 2D Allen--Cahn equation
\begin{equation}
  \frac{\partial \phi}{\partial t} = (\epsilon^2 \Delta \phi - (\phi^3 - \phi)), 
\end{equation}
with periodic condition and the domain $\Omega = [0,2\pi]\times[0,2\pi]$.
We set $\epsilon=0.1$, $T=5$ and $\phi_0 = 0.05\sin(x)\sin(y)$.
\end{example}

\begin{example}\label{ex:4}
We consider the following 2D Cahn--Hilliard equation
\begin{equation}
  \frac{\partial \phi}{\partial t} = - \Delta^2 \phi + \frac{\Delta(\phi^3-\phi)}{\epsilon^2},
\end{equation}
with periodic boundary condition, $\Omega=[0,2\pi]\times[0,2\pi]$, $\epsilon=0.1$, $T=0.016$ and $\phi_0=0.05\sin(x)\sin(y)$.
\end{example}

\begin{example}\label{ex:5}
We consider the following 2D Allen--Cahn equation
\begin{equation}
  \frac{\partial \phi}{\partial t} = M(\epsilon^2 \Delta \phi - (\phi^3 - \phi)), 
\end{equation}
with homogeneous Neumann condition and the domain 
\begin{equation*}
  \Omega = [-1,1]\times[-1,1]\backslash \bigcup_{k=1}^{3}\{\vx\in \mathbb{R}:\|\vx - \vc_k\|_2 \le r_k \},
\end{equation*}
where $\vc_1=(0, 0), \vc_2=(0.6,0.6), \vc_3=(-0.5,0.6)$ and $r_1 = 0.4$, $r_2=r_3=0.2$. 
This domain denotes a square with three circular subregions removed. We set $M=50$, $\epsilon=0.1$, $T=0.2$,
the initial data $\phi_0 = 0.1\text{Rand}(x)$ where $\text{Rand}(x)$ stands for random values in $[-1, 1]$.
\end{example}

\begin{example}\label{ex:6}
We consider the following 2D Cahn--Hilliard equation
\begin{equation}
  \begin{aligned}
\frac{\partial \phi}{\partial t} &= - M \Delta \mu \\
\mu &= -\Delta \phi + \frac{(\phi^3-\phi)}{\epsilon^2},
    \end{aligned}
\end{equation}
with homogeneous Neumann condition, $M=200$, $\epsilon=0.1$,  $\phi_0=0.05\sin(x)\sin(y)$ , $T=0.02$ and the domain
\begin{equation*}
  \Omega = [-1,1]\times[-1,1]\backslash \{\vx\in \mathbb{R}:\|\vx \|_2 \le 0.4 \}.
\end{equation*}
\end{example}

\begin{example}\label{ex:7}
    We consider the following 3D Allen-Cahn equation
    \begin{equation} \phi_t = M(\epsilon^2 \Delta \phi - (\phi^3-\phi) ),
    \end{equation}
    with homogeneous Neumann condition, $M = 100$, $\epsilon^2 = 0.001$, $\phi_0 = \frac{\tanh(\sqrt{2}(|\boldsymbol{x}-\boldsymbol{r}|-0.4))}{2}$, $\vr = (0.25,0,0.15)$ and the domain
    $$
    \Omega = [-1,1]^3\backslash\{ \vx\in \mathbb{R}:\|\vx-\vr\|\le 0.35 \} 
    $$
\end{example}

\subsection{Relaxation}
The Cahn--Hilliard equation requires the computation of fourth-order spatial derivatives, which poses additional 
challenges for automatic differentiation. In practice, generating the Jacobian matrix 
for fourth-order equations can be significantly more expensive than for second-order problems, 
while the presence of high-order derivatives also makes the Jacobian more susceptible to 
numerical perturbations. Thus, we introduce an auxiliary network to learn $\mu^{n+1}$, then 
we need to solve $\phi^{n+1},\mu^{n+1}$ and $r^{n+1}$ at each step.
\subsection{Results and Analysis}
\textbf{Performance.} While our main objective is to develop an energy-stable neural network method, we also include experiments on Example~\ref{ex:1} which is a typical benchmark, to facilitate comparisons with existing approaches. We categorize the compared methods into two groups: 
PINN-based approaches and their variants (left column), and a second group (right column) consisting of 
attention-based models, the state-of-the-art PINN method PirateNet~\cite{WangPirateNets2024JMLR}, 
classical numerical method (Fourier Spectral method), 
the SDTM~\cite{WangSemidiscrete2025} approach, and our proposed method. To ensure a fair comparison, we compare the global relative $L^2$ error for $t\in[0,1]$. 
We employ the semi-implicit BDF2 scheme with 
$\delta t= 1\mathrm{e}\text{-3}$ across all three iterative methods (Fourier, SDTM, and ours), and the results of Vanilla~\cite{RaissiPhysicsinformed2019JCP},
Adaptive~\cite{WightSolving2021CCP}, Time marching~\cite{MatteyNovel2022CMAMEa}, Causal~\cite{WangRespecting2024CMAME}, RBA~\cite{AnagnostopoulosResidualbased2024CMAME},
JAX-PI~\cite{WangUnderstanding2021SJSC} and Self-Attention~\cite{MCCLENNY2023111722} are referred to Ref.~\cite{WangPirateNets2024JMLR}. As shown in Table~\ref{tab:ac1d}, the Fourier spectral 
method yields the best performance. As the time step size is reduced, our method achieves performance comparable to the Fourier spectral method, while 
outperforming the state-of-the-art PINN methods. While SDTM exhibits higher accuracy than our approach in this case, this suggests that 
incorporating the SAV framework may lead to a trade-off between accuracy and stability.
In contrast, due to the lack of stability guarantees, SDTM performs noticeably worse on the Cahn--Hilliard equation, 
where energy stability plays a critical role, which will be observed in Tab.~\ref{tab:ablation}.

\begin{table}[h]
  \caption{Performance of Example~\ref{ex:1}: Global relative $L^2$ error, our method was evaluated over five independent runs}
  \label{tab:ac1d}
  \begin{center}
    \begin{small}
        \begin{tabular}{lcccr}
          \toprule
          Method(PINN)  & Rel. Err. & Method  & Rel. Err. \\
          \midrule
          Vanilla   & $4.98\mathrm{E}\text{-01}$ &  Self-attention   &  $2.10\mathrm{E}\text{-02}$\\
          Adaptive & $2.33\mathrm{E}\text{-02}$ & \textbf{PirateNet} & $\mathbf{2.24}\mathbf{E}\textbf{-05}$ \\
          Time marching & $1.68\mathrm{E}\text{-02}$ &  Fourier &  $\mathbf{1.23\mathbf{E}\textbf{-05}}$ \\
          Causal    & $6.29\mathrm{E}\text{-05}$ &  SDTM    &  $1.33\mathrm{E}\text{-05}$ \\
           RBA & $4.55\mathrm{E}\text{-05}$ & \textbf{Ours}  & $6.12\mathrm{E}\text{-05}(3.81\mathrm{E}\text{-06})$ \\
          JAX-PI     &  $5.37\mathrm{E}\text{-05}$ &  ($\delta t=5\mathrm{e}\text{-4}$)  & $\mathbf{2.09\mathbf{E}\textbf{-05}}(4.68\mathrm{E}\text{-06})$ \\
          \bottomrule
        \end{tabular}
    \end{small}
  \end{center}
  \vskip -0.2in
\end{table}

\noindent\textbf{Stability and Convergence Rate.}
As described in Theorem~\ref{thm:energy-stability}, the scheme is unconditionally energy stable. We test all the examples
about their modified energy evolution in the iterations. All examples are presented with the time interval normalized to $[0,1]$. 
Fig.~\ref{fig:stable}(a) indicates the proposed method satisfies
energy stability for all examples, while the SDTM approach blows up for the Cahn--Hilliard equations. For examples posed 
on irregular domains, evaluating the reference energy would involve additional mesh-based differentiation, and is thus omitted.
Nonetheless, the results computed by our method consistently exhibit energy-stable behavior. Furthermore, we test 
the convergence rate of Example~\ref{ex:3} up to $T=1$ using BDF2
scheme and Euler method in Fig.~\ref{fig:stable}(b). These results confirm the convergence of the proposed method, 
as established in Theorem~\ref{thm:error}.

\begin{figure}[t]
  % \vskip 0.2in
  \begin{center}
    \centerline{\includegraphics[width=0.98\columnwidth]{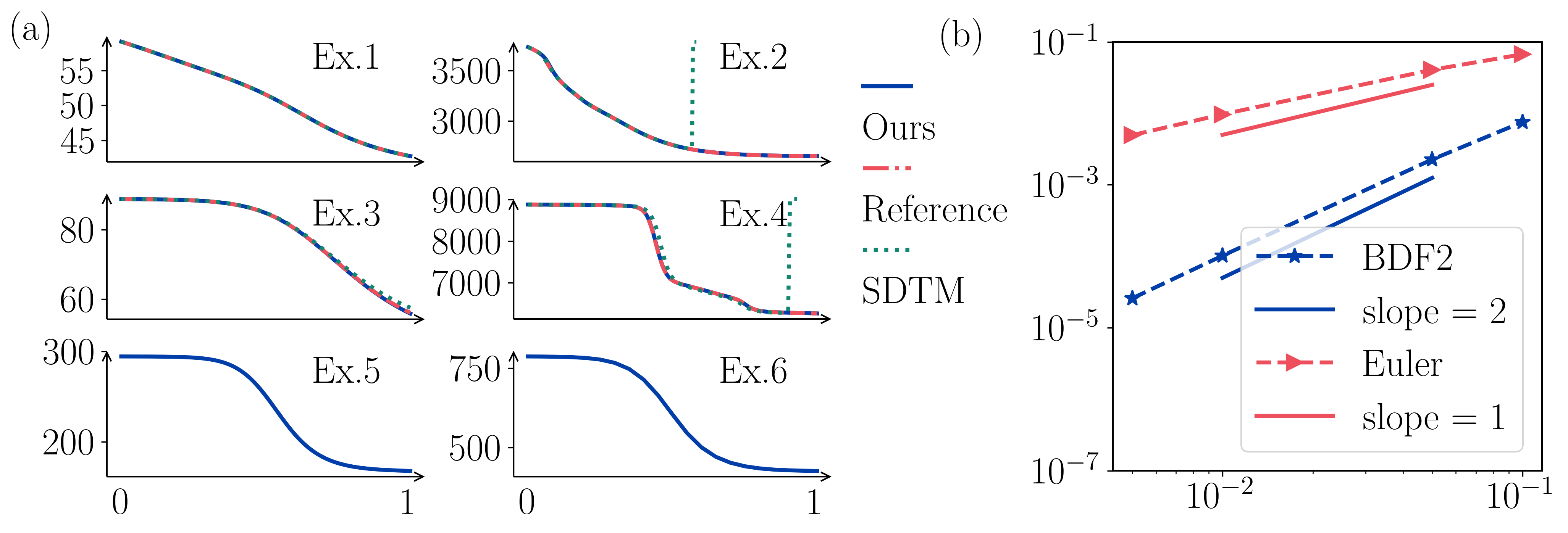}}
    \caption{
      (a) Energy evolution for Example~1--6: all examples exhibit energy dissipation. 
      For Examples~1--4, the proposed method closely follows the reference energy, while the SDTM method becomes unstable for the Cahn--Hilliard equation; 
      (b) Convergence behavior for Example~\ref{ex:3} using 
      $\delta t = 0.1, 0.05, 0.01, 0.005$: the BDF2 exhibits second-order convergence, while the Euler method exhibits first-order convergence.
    }
  \label{fig:stable}
  \end{center}
  \vskip -0.2in
\end{figure}

\noindent\textbf{Irregular Domain.}
Our method benefits from the flexibility of mesh-free neural networks, 
allowing it to handle PDEs posed on irregular domains in a natural and unified manner.
This contrasts with mesh-based methods, i.e.,\ finite element method, which often 
require nontrivial mesh generation and refinement for complex geometries. We show the result
of Example~\ref{ex:5} in Fig.~\ref{fig:ac2dp}, where the first line is the reference solution and the second line is our method using BDF2 scheme. Since smooth 
neural network basis functions have difficulty representing random initial conditions, we start the 
simulation from $t=0.02$. From the figure, 
we observe that our solution closely matches the reference solution, with no visible discrepancies.
As we consider the isotropic setting, the contact angle at the central circular boundary is expected to be perpendicular, which is 
indeed observed and marked by the red squares. This agreement demonstrates that the proposed method 
preserves the underlying physical behavior of the system. The relative $L_2$ error at the final time is approximately 
$7.32\mathrm{E}\text{-03}$. Moreover, we show the results of Example~\ref{ex:6} in Fig.\ref{fig:ex6}.

\begin{table}[h]
  \caption{Parameter sensitivity analysis for Example~\ref{ex:2} }
  \centering
  \begin{tabular}{lccr}
    \toprule
      & $w_r = 1$ & $w_r=50$ &$w_r=1000$ \\
    \midrule
    $R_\text{init} = 1$ & $9.86\mathrm{E}\text{-01}$ & $4.96\mathrm{E}\text{-01}$ & $2.64\mathrm{E}\text{-01}$ \\
    &($2.53\mathrm{E}\text{-01}$)&($5.80\mathrm{E}\text{-02}$)&($1.95\mathrm{E}\text{-02}$)\\[2pt]
    % \vskip 0.1in
    $R_\text{init} = 2$ & $2.90\mathrm{E}\text{-01}$ & $9.96\mathrm{E}\text{-02}$ & $1.30\mathrm{E}\text{-02}$ \\
        &($3.39\mathrm{E}\text{-02}$)&($7.10\mathrm{E}\text{-03}$)&($7.41\mathrm{E}\text{-04}$)\\[2pt]

    $R_\text{init} = 3$ & $5.86\mathrm{E}\text{-02}$ & $2.41\mathrm{E}\text{-03}$ & $2.40\mathrm{E}\text{-03}$ \\
        &($5.06\mathrm{E}\text{-04}$)&($5.06\mathrm{E}\text{-04}$)&($5.06\mathrm{E}\text{-04}$)\\[2pt]

    % $R_\text{i} = 5$ & $2.11\mathrm{E}\text{-03}$ & $2.11\mathrm{E}\text{-03}$ & $2.11\mathrm{E}\text{-03}$ \\
    %     &($5.06\mathrm{E}\text{-04}$)&($5.06\mathrm{E}\text{-04}$)&($5.06\mathrm{E}\text{-04}$)\\
  \end{tabular}
  \label{tab:w_and_r}
\end{table}

\noindent\textbf{Sensitivity Analysis and Ablation.} 
In the implementation of the proposed method, we jointly optimize the two terms 
in Eq.~\eqref{eq:sys2sol}, which correspond to the approximation errors of the phase variable $\phi$
and the auxiliary variable $r$, respectively. A weighting parameter $w_r$ is 
introduced for the loss term associated with $r$. For the neural network, the inner-layer parameters
are randomly initialized with uniform distribution $\mathcal{U}[-R_\text{init},R_\text{init}]$. We investigate the impact of 
these factors on the results in the following experiments. 
In addition, for the Cahn--Hilliard equation, we investigate the effect of relaxation as well as the effect
of SAV framework.

\begin{figure}[t]
  % \vskip 0.2in
  \centering
    \includegraphics[width=0.98\columnwidth]{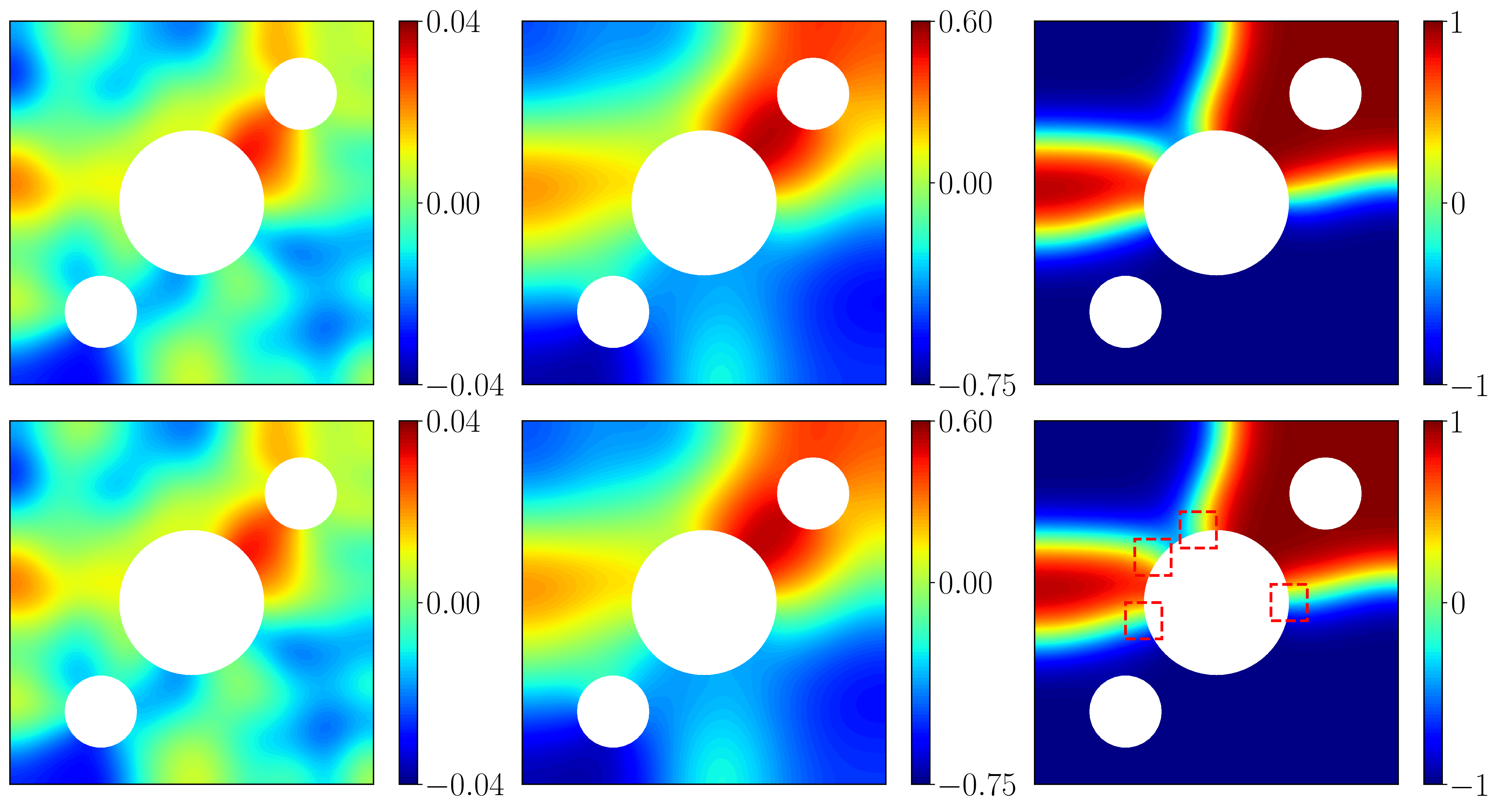}
    \caption{
    Evolution of Example~\ref{ex:5} for $t=0.02$, $t=0.1$, $t=0.2$. 
    Top row: reference solution; bottom row: predicted solution.
    }
  \label{fig:ac2dp}
\end{figure}

From Tab.\ref{tab:w_and_r}, we observe that the experimental performance improves as 
$w_r$ and $R_\text{init}$ increase. Increasing $w_r$ is beneficial as it helps enforce 
energy stability during the iterative solution process. In contrast, the pronounced 
performance differences across different values of $R_\text{init}$ indicate that a more 
appropriate initialization (a better basis) is required to better approximate the solution of the 
equation.

\begin{table}[h]
  \caption{Ablation for relaxation and SAV formulation, ``-" indicates SDTM blows up in the iterations.}
  \centering
  \begin{tabular}{lcccr}
    \toprule
     Formulation & \multicolumn{2}{c} {SAV} & SDTM \\
     Relaxation & w/ & w/o &\\ 
    \midrule
    Example~\ref{ex:2}& $2.65\mathrm{E}\text{-04}$ & $2.11\mathrm{E}\text{-03}$ & \multicolumn{1}{|c} - \\ 
    Example~\ref{ex:4}& $1.92\mathrm{E}\text{-02}$ & $6.54\mathrm{E}\text{-01}$ & \multicolumn{1}{|c} - \\
  \end{tabular}
  \label{tab:ablation}
\end{table}

\section{Conclusion}
In this work, we propose an energy-stable neural network-based method for solving the Allen--Cahn and Cahn--Hilliard equations. By integrating classical time discretization with 
mesh-free neural network representation, the proposed method explicitly preserves discrete energy dissipation.
We established rigorous stability and error analyses, providing theoretical guarantees for the proposed method. 
Numerical experiments further demonstrated that our approach consistently exhibits energy-stable behavior, 
achieves the convergence order predicted by the theory, and outperforms existing neural network-based solvers while 
remaining competitive with classical numerical schemes. In addition, the method naturally extends to irregular computational domains, 
highlighting the flexibility of neural network representations.

%\begin{acknowledgements}
%If you'd like to thank anyone, place your comments here
%and remove the percent signs.
%\end{acknowledgements}

% Authors must disclose all relationships or interests that 
% could have direct or potential influence or impart bias on 
% the work: 
%
% \section*{Conflict of interest}
%
% The authors declare that they have no conflict of interest.

% BibTeX users please use one of
%\bibliographystyle{spbasic}      % basic style, author-year citations
%\bibliographystyle{spmpsci}      % mathematics and physical sciences
%\bibliographystyle{spphys}       % APS-like style for physics
%\bibliography{}   % name your BibTeX data base

% Non-BibTeX users please use
\bibliography{ref}
\bibliographystyle{spmpsci}
\nocite{WuStabilized2014IJNMBE}
\nocite{yang2016linear}
\nocite{XuOverview2025CAMC}
\nocite{SunSurrogate2020CMAME}
\nocite{BarronUniversal1993ITIT}
\nocite{SunLocal2024JCAM}
\nocite{YehudaiPower2019ANIPS}
\nocite{ChenDuality2023}

\newpage
\appendix
\onecolumn
\section{Appendix}
\subsection{Proof}
We collect here the proofs of all lemmas and theorems used in this paper that are not quoted verbatim from the literature. Before presenting the proofs, we first fix the notation and conventions that will be used throughout this section.

\textbf{Notation.}
We use \(\|\cdot\|\) to denote the \(\ell^2\)-norm of vectors and the \(L^2\)-norm of functions, and \(\langle \cdot, \cdot \rangle\) to denote the \(L^2\) inner product of two functions. We denote the spaces \(L^{p}(\Omega)\) by \(L^{p}\) for short. The Sobolev spaces \(H^{s}\) with non-integer order \(s\) will also be used.
In addition, we use \(C\) to denote a generic positive constant that may vary from line to line but is always independent of the time step size \(\delta t\).
We use $E_1[v]=\int_\Omega F(v)\D x+C_0$ and assume
$E_1[v]\geq c_*>0$ for all states under consideration.

\subsubsection{$H^2$ bound for the semi-discrete scheme~\eqref{eq:savbdf2}}\label{sec:proof H2}
For the Allen--Cahn equation, the modified energy law gives uniform $H^1$
and SAV bounds for the exact semi-discrete solution. The following standard
estimate supplies the additional spatial regularity used below.
\begin{lemma}[$H^2$ bound for the semi-discrete scheme~\eqref{eq:savbdf2}]
\label{lem:H2-estimate}
Let $d\leq3$, and let $\Omega$ be a periodic box or a smooth bounded domain
with homogeneous Neumann boundary conditions. Fix $T>0$ and
$0<\delta t\leq1$. Under Assumption~\ref{assump:F-smooth}, suppose that
$\phi^0\in H^3(\Omega)$ satisfies the boundary compatibility conditions.
Let $\{(\phi^n,r^n)\}$ be the exact BDF2--SAV sequence initialized by one
exact Euler--SAV step. Then
\begin{equation}\label{eq:H2-L2}
\max_{0\leq n\leq \lfloor T/\delta t\rfloor}
\|\Delta\phi^n\|^2
\leq C\left(T+1+\|\Delta\phi^0\|^2
+\delta t\|\nabla\Delta\phi^0\|^2\right),
\end{equation}
where $C$ is independent of $\delta t$. Consequently,
$\max_n\|\phi^n\|_{H^2}\leq C_T$.
\end{lemma}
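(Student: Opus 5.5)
We test the first equation of \eqref{eq:savbdf2} (with $\mathcal G=I$, $\mathcal L=-\Delta+\lambda I$) against $-\Delta\delta_2\phi^{n+1}$ and use the BDF2 identity from the proof of Theorem~\ref{thm:residual-energy-law}, this time with the bilinear form $B(u,v)=\langle\Delta u,\Delta v\rangle$. The energy law of Theorem~\ref{thm:energy-stability} already gives $\lambda\|\phi^n\|^2+\|\nabla\phi^n\|^2+|r^n|^2\le C$ uniformly in $n$, since $\bar E^n$ is non-increasing and $\bar E^1$ is bounded by the data. With $d\le 3$ and Assumption~\ref{assump:F-smooth}, this gives uniform $H^1$ bounds, and hence bounds on $\phi^{n+\frac12}=2\phi^n-\phi^{n-1}$ in $H^1$. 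The $H^2$ estimate then follows from a discrete Gronwall argument on the quantity $\|\Delta\phi^{n+1}\|^2+\|\Delta(2\phi^{n+1}-\phi^n)\|^2$.

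\textbf{Step 1: the testing identity.} Concretely, I would take the inner product of
\[
\delta_2\phi^{n+1}=2\delta t\,\Delta\phi^{n+1}-2\delta t\lambda\phi^{n+1}-2\delta t\,r^{n+1}h(\phi^{n+\frac12})
\]
with $-\Delta\phi^{n+1}$. By the BDF2 identity, the left side equals $\tfrac12\bigl(\|\Delta\phi^{n+1}\|^2+\|\Delta(2\phi^{n+1}-\phi^n)\|^2-\|\Delta\phi^n\|^2-\|\Delta(2\phi^n-\phi^{n-1})\|^2+\|\Delta(\phi^{n+1}-2\phi^n+\phi^{n-1})\|^2\bigr)$, after integrating by parts using the Neumann or periodic boundary conditions. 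The right side gives $-2\delta t\|\nabla\Delta\phi^{n+1}\|^2-2\delta t\lambda\|\Delta\phi^{n+1}\|^2$, plus the nonlinear term $2\delta t\,r^{n+1}\langle h(\phi^{n+\frac12}),\Delta\phi^{n+1}\rangle$.

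\textbf{Step 2: the nonlinear term (main obstacle).} I expect this to be the main difficulty. I would bound it by $C\delta t\,|r^{n+1}|\,\|h(\phi^{n+\frac12})\|\,\|\Delta\phi^{n+1}\|$, with $|r^{n+1}|\le C$ and $\sqrt{E_1}\ge\sqrt{c_*}$. It remains to control $\|F'(\phi^{n+\frac12})\|_{L^2}$. The growth bound $|F'(x)|\le C(|x|^{p+1}+1)$ together with $H^1\hookrightarrow L^6$ suffices whenever $2(p+1)\le 6$. For the larger $p$ permitted in $d=3$, I would use Gagliardo--Nirenberg interpolation, $\|v\|_{L^\infty}\le C\|v\|_{H^1}^{1/2}\|v\|_{H^2}^{1/2}$. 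This yields a polynomial bound in $\|\Delta\phi^{n}\|$ and $\|\Delta\phi^{n-1}\|$, which is superlinear and so threatens a blow-up in Gronwall.

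To avoid that, my first attempt would instead integrate by parts once more, writing the term as $\langle\nabla h(\phi^{n+\frac12}),\nabla\phi^{n+1}\rangle$-type quantities, or pairing it with $\|\nabla\Delta\phi^{n+1}\|$ and absorbing into the dissipation term $2\delta t\|\nabla\Delta\phi^{n+1}\|^2$. This requires only $\|\nabla F'(\phi^{n+\frac12})\|_{H^{-1}}$-level control, i.e.\ $\|F'(\phi^{n+\frac12})\|\le C$. That bound follows from the uniform $H^1$ bound when $p$ is restricted as in Assumption~\ref{assump:F-smooth}. Concretely, $|\langle h,\Delta\phi^{n+1}\rangle|=|\langle\nabla h,\nabla\phi^{n+1}\rangle|$, or by Young's inequality $\le\tfrac12\|\Delta\phi^{n+1}\|^2+C\|h\|^2$. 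With $\|h\|$ bounded, this gives the increment bound $C\delta t(1+\|\Delta\phi^{n+1}\|^2)$.

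\textbf{Step 3: Gronwall and the start-up step.} Summing the increment bound from $n=1$ to $n$ and applying discrete Gronwall for $\delta t$ small (then trivially extended to $\delta t\le1$) yields $\|\Delta\phi^n\|^2\le C(T+1+\|\Delta\phi^1\|^2+\|\Delta\phi^0\|^2)$. Finally I would treat the Euler start-up step analogously. Testing with $-\Delta\phi^1$ gives $\|\Delta\phi^1\|^2\le C(\|\Delta\phi^0\|^2+\delta t)$. The term $\delta t\|\nabla\Delta\phi^0\|^2$ in \eqref{eq:H2-L2} arises when bounding the BDF2 quantity $\|\Delta(2\phi^1-\phi^0)\|^2$ through $\phi^1-\phi^0=O(\delta t)$ in $H^2$, estimated using $\phi^0\in H^3$. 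Elliptic regularity on the periodic box or smooth domain then converts the $\|\Delta\phi^n\|$ bound, together with the $L^2$ bound, into $\|\phi^n\|_{H^2}\le C_T$.
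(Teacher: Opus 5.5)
There is a genuine gap in Steps~1--2: the test function does not produce the identity you write down. Testing with $-\Delta\phi^{n+1}$ gives $\langle\nabla\delta_2\phi^{n+1},\nabla\phi^{n+1}\rangle$ on the left, i.e.\ the BDF2 identity in gradient norms. On the right it gives $-2\delta t\|\Delta\phi^{n+1}\|^2-2\delta t\lambda\|\nabla\phi^{n+1}\|^2$. This is only an $H^1$-level estimate. The Laplacian-level left side and the dissipation $-2\delta t\|\nabla\Delta\phi^{n+1}\|^2$ that you display come from testing with $\Delta^2\phi^{n+1}$, which is the paper's choice. With that test function, however, the nonlinear term is $-2\delta t\,r^{n+1}\langle h(\phi^{n+\frac12}),\Delta^2\phi^{n+1}\rangle=2\delta t\,r^{n+1}\langle\nabla h(\phi^{n+\frac12}),\nabla\Delta\phi^{n+1}\rangle$, not $2\delta t\,r^{n+1}\langle h(\phi^{n+\frac12}),\Delta\phi^{n+1}\rangle$. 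So your Young bound with $\|h\|\le C$ controls a term that does not occur in the $H^2$ identity. Your instinct to pair the true term with $\|\nabla\Delta\phi^{n+1}\|$ and absorb it into the dissipation is right. But that requires the $L^2$ norm of $\nabla F'(\phi^{n+\frac12})=F''(\phi^{n+\frac12})\nabla\phi^{n+\frac12}$, not ``$H^{-1}$-level control'', and this quantity is not bounded by the energy. Even $\|F'(\phi^{n+\frac12})\|\le C$ fails from $H^1$ bounds alone when $d=3$ and $2<p<4$, as you noted yourself. The same mismatch affects your Euler start-up step.

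The missing ingredient is the interpolation estimate of Lemma~2.3 in Ref.~\cite{shen2018convergence}. For $v$ bounded in $H^1$ and every $\varepsilon>0$, it gives $C\|\nabla F'(v)\|^2\le\varepsilon\|\nabla\Delta v\|^2+C_\varepsilon$. It follows, e.g., from $\|v\|_{L^\infty}\lesssim\|v\|_{H^1}^{3/4}\|v\|_{H^3}^{1/4}$ in $d=3$, which yields $\|\nabla F'(v)\|^2\lesssim 1+\|v\|_{H^3}^{p/2}$; this is exactly where $p<4$ enters. Because $v=2\phi^n-\phi^{n-1}$ is lagged, the $\varepsilon$-term involves $z^n$ and $z^{n-1}$ (with $z^k:=\nabla\Delta\phi^k$). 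These cannot be absorbed into $\delta t\|z^{n+1}\|^2$ within one step. The paper takes $\varepsilon=1/20$ and uses $\|2z^n-z^{n-1}\|^2\le 8\|z^n\|^2+2\|z^{n-1}\|^2$ to obtain $\mathcal Q^{n+1}-\mathcal Q^n+\delta t\|z^{n+1}\|^2\le\frac{4}{5}\delta t\|z^n\|^2+\frac{1}{5}\delta t\|z^{n-1}\|^2+C\delta t$. Here $\mathcal Q^n=\frac12\bigl(\|\Delta\phi^n\|^2+\|\Delta(2\phi^n-\phi^{n-1})\|^2\bigr)$ is half your Gronwall quantity. This recurrence telescopes directly, with no Gronwall argument and no smallness condition on $\delta t$. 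The surviving endpoint terms $\delta t\|z^1\|^2$ and $\frac{1}{5}\delta t\|z^0\|^2$, together with $\delta t\|\nabla F'(\phi^0)\|^2$ in the Euler step, are the actual source of $\delta t\|\nabla\Delta\phi^0\|^2$ and of the hypothesis $\phi^0\in H^3$.

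Finally, suppose you carried out your opening plan of testing with $-\Delta\delta_2\phi^{n+1}$. You would get a consistent $H^2$ identity whose nonlinear term has the form $\delta t\,r^{n+1}\langle\nabla h,\nabla\delta_2\phi^{n+1}\rangle$, but with no $H^3$ dissipation. Closing it needs $\|\nabla F'(v)\|^2\le C(1+\|v\|_{H^2}^2)$. That holds for $d\le 2$ or $p\le 2$, which covers the double well. For $d=3$ and $2<p<4$, the available bound is only $C(1+\|v\|_{H^2}^{p})$, which is superlinear. So that route does not prove the lemma under the full Assumption~\ref{assump:F-smooth}.
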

\begin{proof}
The Euler and BDF2 energy laws imply
\begin{equation}\label{eq:H2-energy-bound}
\sup_{0\leq n\leq \lfloor T/\delta t\rfloor}
\bigl(\|\phi^n\|_{H^1}+|r^n|\bigr)\leq C.
\end{equation}
The integrations below can be justified by the standard Galerkin
approximation. Lemma~2.3 of Ref.~\cite{shen2018convergence}, together with
\eqref{eq:H2-energy-bound} and $E_1\geq c_*$, gives, for every
$\varepsilon>0$,
\begin{equation}\label{eq:H2-nonlinear-bound}
C\|\nabla F'(v)\|^2
\leq \varepsilon\|\nabla\Delta v\|^2+C_\varepsilon
\end{equation}
for $v=\phi^0$ and $v=2\phi^n-\phi^{n-1}$, $n\geq1$.

We first estimate the Euler--SAV start. Eliminating the chemical potential
from~\eqref{eq:saveuler}, testing the resulting equation with
$\Delta^2\phi^1$, and using~\eqref{eq:H2-nonlinear-bound} yield
\begin{equation}\label{eq:H2-Euler-start}
\|\Delta\phi^1\|^2
+\delta t\|\nabla\Delta\phi^1\|^2
\leq C\left(1+\|\Delta\phi^0\|^2
+\delta t\|\nabla\Delta\phi^0\|^2\right).
\end{equation}
Indeed, the time-difference term is
\[
\left\langle\phi^1-\phi^0,\Delta^2\phi^1\right\rangle
=\frac12\left(
\|\Delta\phi^1\|^2-\|\Delta\phi^0\|^2
+\|\Delta(\phi^1-\phi^0)\|^2\right),
\]
whereas the nonlinear term is bounded by
\[
\left|r^1\langle h(\phi^0),\Delta^2\phi^1\rangle\right|
\leq \frac12\|\nabla\Delta\phi^1\|^2
+C\|\nabla F'(\phi^0)\|^2.
\]

For the BDF2 steps, set
\[
y^n:=\Delta\phi^n,\qquad z^n:=\nabla\Delta\phi^n,
\]
and introduce the positive-definite two-step quantity
\[
\mathcal Q^n:=\frac52\|y^n\|^2+\frac12\|y^{n-1}\|^2
-2\langle y^n,y^{n-1}\rangle,\qquad n\geq1.
\]
In particular, $\mathcal Q^n\geq\frac12\|y^n\|^2$ and
$\mathcal Q^1\leq C(\|y^1\|^2+\|y^0\|^2)$. Testing the first equation of
\eqref{eq:savbdf2}, after eliminating $\mu^{n+1}$, with
$\Delta^2\phi^{n+1}$ gives
\begin{align*}
&\frac{1}{2\delta t}
\left\langle3\phi^{n+1}-4\phi^n+\phi^{n-1},
\Delta^2\phi^{n+1}\right\rangle
+\|z^{n+1}\|^2+\lambda\|y^{n+1}\|^2\\
&\qquad=-r^{n+1}
\left\langle h(2\phi^n-\phi^{n-1}),
\Delta^2\phi^{n+1}\right\rangle.
\end{align*}
The BDF2 identity
\begin{align*}
&\left\langle3\phi^{n+1}-4\phi^n+\phi^{n-1},
\Delta^2\phi^{n+1}\right\rangle\\
&\qquad=\mathcal Q^{n+1}-\mathcal Q^n
+\frac12\|y^{n+1}-2y^n+y^{n-1}\|^2.
\end{align*}
The nonlinear term is bounded as in the Euler step by
\[
\frac12\|z^{n+1}\|^2
+C\|\nabla F'(2\phi^n-\phi^{n-1})\|^2.
\]
Thus, choosing $\varepsilon=1/20$ in~\eqref{eq:H2-nonlinear-bound}
and using the BDF2 identity above imply
\begin{equation}\label{eq:H2-BDF-recurrence}
\mathcal Q^{n+1}-\mathcal Q^n
+\delta t\|z^{n+1}\|^2
\leq \frac45\delta t\|z^n\|^2
+\frac15\delta t\|z^{n-1}\|^2+C\delta t,
\end{equation}
where we used
$\|2z^n-z^{n-1}\|^2\leq8\|z^n\|^2+2\|z^{n-1}\|^2$.

For $m\geq2$, summing~\eqref{eq:H2-BDF-recurrence} from $n=1$ to
$m-1$ gives the endpoint estimate
\begin{equation}\label{eq:H2-BDF-sum}
\mathcal Q^m+\delta t\|z^m\|^2
+\frac15\delta t\|z^{m-1}\|^2
\leq \mathcal Q^1+\delta t\|z^1\|^2
+\frac15\delta t\|z^0\|^2+CT.
\end{equation}
Combining~\eqref{eq:H2-BDF-sum} with the start-up estimate
\eqref{eq:H2-Euler-start} and
$\mathcal Q^m\geq\frac12\|\Delta\phi^m\|^2$ proves
\eqref{eq:H2-L2} for $m\geq2$; the cases $m=0,1$ follow directly from
the initial data and~\eqref{eq:H2-Euler-start}. The $H^2$ bound follows
from elliptic regularity and~\eqref{eq:H2-energy-bound}.
\end{proof}

\subsubsection{Proof of Lemma~\texorpdfstring{\ref{lem:error-accumulation}}{Lemma \ref{lem:error-accumulation}}}\label{sec:proof error accumulation}
\begin{proof}
Set $\mathcal A:=-\Delta+\lambda I$ and
$H(v):=F'(v)/\sqrt{E_1[v]}$. For $n\geq1$, write
\[
\bar{\hat\phi}^{\,n+1}:=2\hat\phi^n-\hat\phi^{n-1},
\qquad
\bar\phi^{n+1}:=2\phi^n-\phi^{n-1},
\]
and define $\hat w^{n+1}:=\hat\mu^{n+1}-\mu^{n+1}$, where
$\hat\mu^{n+1}=\mathcal A\hat\phi^{n+1}
+\hat r^{n+1}H(\bar{\hat\phi}^{\,n+1})$.

Let $\eta_\phi^{n+1}:=\hat\phi^{n+1}-\hat\phi^{n+1,*}$ and
$\eta_r^{n+1}:=\hat r^{n+1}-\hat r^{n+1,*}$. Subtracting the exact
BDF2--SAV equations from the linear subproblem formed with the numerical
history gives
\begin{align}
\delta_2\hat e^{n+1}
&=-2\delta t\,\hat w^{n+1}+\xi^{n+1},
\label{eq:err_phi}\\
\delta_2\hat s^{n+1}
&=\frac12\left\langle H(\bar{\hat\phi}^{\,n+1}),
\delta_2\hat e^{n+1}\right\rangle
+\mathcal N^{n+1}+\tau^{n+1},
\label{eq:err_s}\\
\hat w^{n+1}
&=\mathcal A\hat e^{n+1}
+\hat s^{n+1}H(\bar{\hat\phi}^{\,n+1})
+\mathcal R^{n+1},
\label{eq:err_mu}
\end{align}
where $\delta_2v^{n+1}:=3v^{n+1}-4v^n+v^{n-1}$ and
\begin{align*}
\xi^{n+1}
&:=3\eta_\phi^{n+1}
+2\delta t\bigl(\mathcal A\eta_\phi^{n+1}
+\eta_r^{n+1}H(\bar{\hat\phi}^{\,n+1})\bigr),\\
\tau^{n+1}
&:=3\eta_r^{n+1}
-\frac32\left\langle H(\bar{\hat\phi}^{\,n+1}),
\eta_\phi^{n+1}\right\rangle,\\
\mathcal N^{n+1}
&:=\frac12\left\langle
H(\bar{\hat\phi}^{\,n+1})-H(\bar\phi^{n+1}),
\delta_2\phi^{n+1}\right\rangle,\\
\mathcal R^{n+1}
&:=r^{n+1}\bigl[H(\bar{\hat\phi}^{\,n+1})
-H(\bar\phi^{n+1})\bigr].
\end{align*}

Lemma~\ref{lem:H2-estimate} bounds the exact semi-discrete sequence, whereas
the numerical sequence is bounded by the hypothesis of the present lemma.
Together with $H^2(\Omega)\hookrightarrow L^\infty(\Omega)$ for $d\leq3$
and the lower bound on $E_1$, these estimates imply
\begin{align}
\|\xi^{n+1}\|+|\tau^{n+1}|
&\leq C\epsilon_{n+1},
\label{eq:local-defect-bound}\\
\|H(\bar{\hat\phi}^{\,n+1})-H(\bar\phi^{n+1})\|
&\leq C\bigl(\|\hat e^n\|+\|\hat e^{n-1}\|\bigr).
\label{eq:lipschitz_bound}
\end{align}
In particular, this use of Lemma~\ref{lem:H2-estimate} concerns only the exact
semi-discrete solution and does not assert an a priori bound for the numerical
sequence.

Taking the inner products of \eqref{eq:err_phi} and \eqref{eq:err_mu} with
$\hat w^{n+1}$ and $\delta_2\hat e^{n+1}$, respectively, and multiplying
\eqref{eq:err_s} by $2\hat s^{n+1}$ yields
\begin{equation}\label{eq:energy_identity}
\begin{aligned}
&\langle\mathcal A\hat e^{n+1},\delta_2\hat e^{n+1}\rangle
+2\hat s^{n+1}\delta_2\hat s^{n+1}
+2\delta t\|\hat w^{n+1}\|^2\\
&\quad=\langle\xi^{n+1},\hat w^{n+1}\rangle
-\langle\mathcal R^{n+1},\delta_2\hat e^{n+1}\rangle
+2\hat s^{n+1}\mathcal N^{n+1}
+2\hat s^{n+1}\tau^{n+1}.
\end{aligned}
\end{equation}
Using \eqref{eq:err_phi} in the second term on the right, followed by
\eqref{eq:local-defect-bound}, \eqref{eq:lipschitz_bound}, Young's inequality,
and the uniform $L^2$ bound for $\mu^{n+1}$, gives
\begin{equation}\label{eq:energy-rhs-bound}
\mathrm{RHS~of~\eqref{eq:energy_identity}}
\leq\delta t\|\hat w^{n+1}\|^2
+C\delta t\bigl(\|\hat e^n\|^2+\|\hat e^{n-1}\|^2
+|\hat s^{n+1}|^2\bigr)
+C\delta t^{-1}\epsilon_{n+1}^2.
\end{equation}

For $\|v\|_{\mathcal A}^2:=\langle\mathcal Av,v\rangle$, introduce
\[
\mathcal E_e^n:=\frac12\left(
\|\hat e^n\|_{\mathcal A}^2
+\|2\hat e^n-\hat e^{n-1}\|_{\mathcal A}^2\right)
+|\hat s^n|^2+|2\hat s^n-\hat s^{n-1}|^2.
\]
The exact BDF2 identity
\[
2a(3a-4b+c)=a^2+(2a-b)^2-b^2-(2b-c)^2+(a-2b+c)^2
\]
applied to \eqref{eq:energy_identity}--\eqref{eq:energy-rhs-bound} implies
\[
\mathcal E_e^{n+1}-\mathcal E_e^n
\leq C\delta t\bigl(\mathcal E_e^{n+1}+\mathcal E_e^n\bigr)
+C\delta t^{-1}\epsilon_{n+1}^2.
\]
For $\delta t\leq\delta t_0$, the first term on the right is absorbed, and
the discrete Gronwall inequality gives
\[
\mathcal E_e^m\leq
C\left(\mathcal E_e^1+\delta t^{-1}\sum_{j=2}^{m}\epsilon_j^2\right).
\]
Since the initial data agree, $\mathcal E_e^1\leq C\mathcal E_{\rm st}$. Finally,
$\|\hat e^m\|_{\mathcal A}^2+|\hat s^m|^2\leq2\mathcal E_e^m$, which proves
\eqref{eq:inexact-solve-stability}.
\end{proof}

\subsubsection{Proof of Lemma~\texorpdfstring{\ref{lem:solver-error}}{Lemma \ref{lem:solver-error}}}\label{sec:proof solver error}
\begin{proof}
Fix admissible $a$ and $b$, abbreviate $\psi=\psi_{a,b}$, and define
$A=(A_1,A_2):H^2_{\mathcal B}(\Omega)\times\mathbb R
\to L^2(\Omega)\times\mathbb R$ by
\begin{equation}\label{eq:ope}
\begin{aligned}
A_1(\phi,r)&:=\mathcal L_{\delta t}\phi+2\delta t\,r\psi,\\
A_2(\phi,r)&:=r-\tfrac12\langle\psi,\phi\rangle.
\end{aligned}
\end{equation}
We first establish solvability and a uniform stability estimate for $A$.
Given $(g,\zeta)\in L^2(\Omega)\times\mathbb R$, the equation
$A_2(\phi,r)=\zeta$ gives
$r=\zeta+\frac12\langle\psi,\phi\rangle$. Substitution into
$A_1(\phi,r)=g$ yields the variational problem
\[
\begin{aligned}
&(3+2\delta t\lambda)(\phi,v)
+2\delta t(\nabla\phi,\nabla v)
+\delta t\langle\psi,\phi\rangle\langle\psi,v\rangle\\
&\qquad=(g-2\delta t\,\zeta\psi,v)
\end{aligned}
\]
for all admissible $v\in H^1(\Omega)$. The left-hand side is continuous and
coercive. Hence Lax--Milgram gives a unique $H^1$ solution, and the assumed
elliptic regularity gives $\phi\in H^2_{\mathcal B}(\Omega)$. Thus $A$ is
bijective.

To make the stability constant uniform in $\delta t$, set
$g=A_1(\phi,r)$ and $\zeta=A_2(\phi,r)$. Testing the variational equation above
with $v=\phi$ and using $0<\delta t\leq1$ and $\|\psi\|\leq C$ gives
\[
3\|\phi\|^2
\leq\bigl(\|g\|+2\delta t\,|\zeta|\|\psi\|\bigr)\|\phi\|,
\]
and hence $\|\phi\|\leq C(\|g\|+|\zeta|)$. The identity
$r=\zeta+\frac12\langle\psi,\phi\rangle$ then gives the same bound for $|r|$.
Consequently,
\begin{equation}\label{eq:phi-r-bound}
\|\phi\|+|r|
\leq C\bigl(\|A_1(\phi,r)\|+|A_2(\phi,r)|\bigr).
\end{equation}
Moreover,
\[
2\delta t\,\Delta\phi
=(3+2\delta t\lambda)\phi+2\delta t\,r\psi-A_1(\phi,r).
\]
Combining this identity with \eqref{eq:phi-r-bound} and the elliptic
$H^2$ estimate yields
\begin{equation}\label{eq:inequality}
\|\phi\|_{H^2}+|r|
\leq C\delta t^{-1}
\bigl(\|A_1(\phi,r)\|+|A_2(\phi,r)|\bigr).
\end{equation}

It remains to control the continuous residual by its sampled values. Let
$\theta$ collect the coefficient blocks $c_\phi,c_r,a,b$. Then
$\theta\in\mathbb R^{3M+1}$, and the coefficient constraints imply
$\|\theta\|_2\leq\sqrt3B$. The definitions of $H_B$ and $L_{H,B}$ give
\[
\begin{aligned}
\|h(v)\|_{L^\infty}&\leq H_B,\\
\|h(v)-h(w)\|_{L^\infty}
&\leq L_{H,B}\|v-w\|_{L^\infty}.
\end{aligned}
\]
for states induced by the coefficient ball. Consequently,
\begin{equation}\label{eq:residual-envelope-lipschitz}
\|\mathcal R_\theta\|_{L^\infty}\leq G_B,
\qquad
\|\mathcal R_\theta-\mathcal R_{\widetilde\theta}\|_{L^\infty}
\leq L_B\|\theta-\widetilde\theta\|_2.
\end{equation}
The second estimate follows by separating the current phase coefficient, the
current scalar coefficient, and the two history coefficients; the four
corresponding constants are precisely the four terms in the definition of
$L_B^2$.

Let $\mathcal R_B$ be the class of residual functions generated by the
admissible parameters. Since
$|q^2-\widetilde q^{\,2}|\leq2G_B|q-\widetilde q|$, the Euclidean covering
number of $\mathcal R_B^2:=\{q^2:q\in\mathcal R_B\}$ satisfies
\[
\log\mathcal N\left(\varepsilon,\mathcal R_B^2,L^\infty\right)
\leq(3M+1)\log\left(1+
\frac{4\sqrt3\,B G_BL_B}{\varepsilon}\right).
\]
The contribution of one fixed anchor is $O(G_B^2/\sqrt N)$. The standard
Dudley entropy estimate therefore gives
\[
\mathfrak R_N(\mathcal R_B^2)
\leq C G_B^2
\left[1+\sqrt{\log\left(1+
\frac{4\sqrt3\,B L_B}{G_B}\right)}\right]
\sqrt{\frac{3M+1}{N}}.
\]
Here $\mathfrak R_N$ denotes the expected Rademacher complexity; the anchor
term is absorbed into the constant on the right.
The residual class is separable because it is continuously parameterized by
a compact finite-dimensional coefficient set, so the required supremum is
measurable.
Combining this estimate with the uniform Rademacher bound of
Ref.~\cite{shin2023error} yields, with probability at least $1-\eta$,
\begin{equation}\label{eq:continuous-empirical-residual}
|\Omega|^{-1}\|\mathcal R_c\|^2
\leq\frac1N\sum_{i=1}^N|\mathcal R_c(X_i)|^2+\Xi_{N,\eta}
\end{equation}
simultaneously over the coefficient ball. The factor $|\Omega|$ is fixed and
is absorbed into the constant below. Applying \eqref{eq:inequality} to
$(\phi_c-\phi^*,r_c-r^*)$, whose two operator residuals are exactly the two
terms in \eqref{eq:empirical-loss}, and then using
\eqref{eq:continuous-empirical-residual} proves
\eqref{eq:solver-generalization}.
\end{proof}

\subsubsection{Proof of Theorem~\texorpdfstring{\ref{thm:error}}{Theorem \ref{thm:error}}}\label{sec:proof main theorem}
\begin{proof}
Let $(\phi^n,r^n)$ be the exact semi-discrete BDF2--SAV sequence. By
Lemma~\ref{lem:error-semi-discrete},
\begin{equation}\label{eq:truncation_error}
\max_{1\leq n\leq N_T}
\left(\lambda\|\phi^n-\phi(t_n)\|^2
+\|\nabla(\phi^n-\phi(t_n))\|^2
+|r^n-r(t_n)|^2\right)
\leq C\delta t^4.
\end{equation}

Let $\mathcal A_{N,\eta}$ be the uniform sampling event in
Lemma~\ref{lem:solver-error}. Because that lemma is uniform over the current
and history coefficient balls, the same event applies at every time level,
even though the computed history depends on the collocation points. On
$\mathcal A_{N,\eta}$, its parameters at step $k$ are
\[
a=c_\phi^{k-1},\qquad b=c_\phi^{k-2},\qquad
\gamma=\frac{4\hat r^{k-1}-\hat r^{k-2}}{3},\qquad c=c^k.
\]
Thus, for $2\leq k\leq N_T$,
\begin{equation}\label{eq:solver_eps}
\epsilon_k^2
\leq C\delta t^{-2}
\bigl(\mathcal J_N^k+\Xi_{N,\eta}\bigr).
\end{equation}
Substitution into Lemma~\ref{lem:error-accumulation}, together with
$\mathcal E_{\rm st}\leq C\delta t^4$, gives
\[
\begin{aligned}
&\max_{1\leq n\leq N_T}
\left(\lambda\|\hat\phi^n-\phi^n\|^2
+\|\nabla(\hat\phi^n-\phi^n)\|^2
+|\hat r^n-r^n|^2\right)\\
&\qquad\leq C\left[\delta t^4+\delta t^{-3}
\sum_{k=2}^{N_T}(\mathcal J_N^k+\Xi_{N,\eta})\right].
\end{aligned}
\]
Combining this estimate with \eqref{eq:truncation_error} and using
$(a+b)^2\leq2a^2+2b^2$ proves
\eqref{eq:global-probability-bound}. Finally, if each summand is bounded by
$C_{\rm tol}\delta t^8$, then $N_T\leq T/\delta t$ implies
\[
\delta t^{-3}\sum_{k=2}^{N_T}
(\mathcal J_N^k+\Xi_{N,\eta})\leq C\delta t^4.
\]
Taking square roots and using $\lambda>0$ proves the final assertion.
\end{proof}

\subsection{Experiments}

\subsubsection{Model Setup and Computational Cost}
\label{sec:model}
In this work, we adopt shallow feedforward neural networks.
More specifically, for problems with periodic boundary conditions, 
we enforce the boundary condition by incorporating a periodic Fourier feature layer \cite{DongMethod2021JCP} in the first 
layer of the network. For network parameter initialization, we employ a uniform distribution and set $R_\text{init}=1$ in the general setting. 
In addition, for the 2D Cahn--Hilliard equation and the 1D Allen--Cahn equation, 
we employ a multi-scale neural network architecture (MsDNN) \cite{LiMultiscale2020C} to better capture features across 
different spatial scales.

The computational cost for the examples with different settings are as follows: 
\begin{table}[h]
  \caption{Computational cost for the examples, the numbers in parentheses reported 
  in the runtime indicate the time cost on the initial construction of the Jacobian matrix.}
  \centering
  \begin{tabular}{lcccccr}
    \toprule
      & MsDNN & Activation & Width & Data Size & Iterations & Runtime (s) \\
    \midrule
    Example~\ref{ex:1}& \ding{51} & $\tanh$ & $1000$ & 4000 & 200 & 101 (7) \\
    Example~\ref{ex:2}& \ding{55} & $\tanh$ & $1000$ & 4000 & 400 & 155 (17) \\
    Example~\ref{ex:3}& \ding{55} & $\tanh$ & $1000$ & 4000 & 500 & 174 (16) \\
    Example~\ref{ex:4}& \ding{51} & $\sin$ & $1000$ & 4096 & 1600 & 948 (66) \\
    % Example~\ref{ex:4}& + & $2000$ & 10000 & 1600 & 4016 (355) \\
    Example~\ref{ex:5}& \ding{55} & $\tanh$ & $1000$ & 9128 & 180 & 62 (15) \\
    Example~\ref{ex:6}& \ding{51} & $\tanh$ & $1000$ & 6412 & 20 & 86 (57) \\
  \end{tabular}
  \label{tab:runtime}
  \vskip -0.1in
\end{table}

\subsubsection{Visualization Results for Examples}
\label{sec:visualresults}
We show the visualization results for all the six examples: 
\begin{figure}[h]
  % \vskip 0.2in
  \centering
    \includegraphics[width=0.9\textwidth]{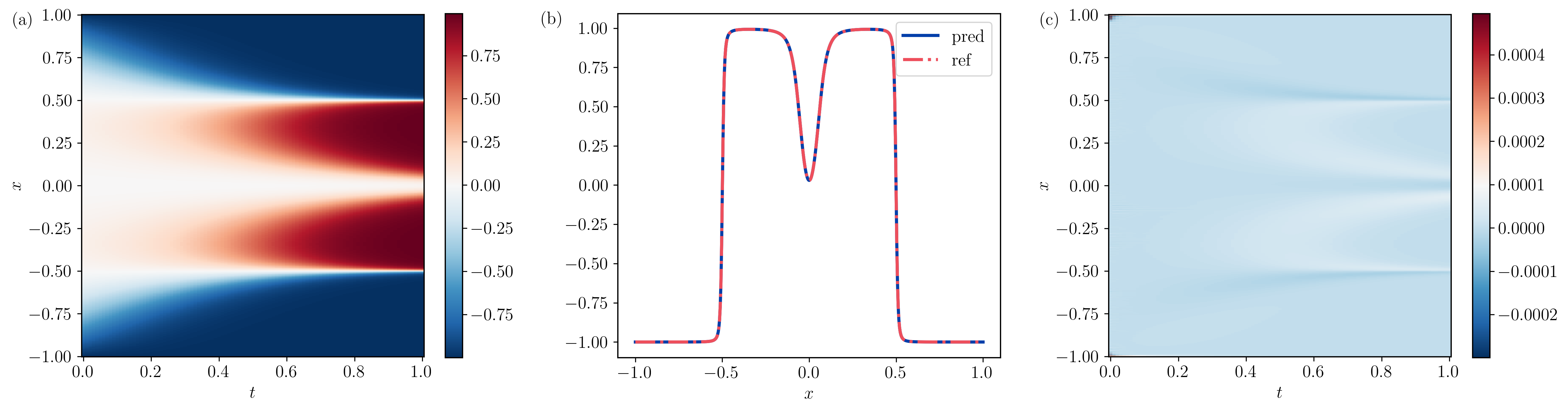}
    \caption{
    Visualization of Example~\ref{ex:1}: (a) global predicted solution; (b) predicted and reference solutions at the final time;
     (c) global error. 
    }
  \label{fig:ex1}
\end{figure}

\begin{figure}[h]
  % \vskip 0.2in
  \centering
    \includegraphics[width=0.9\textwidth]{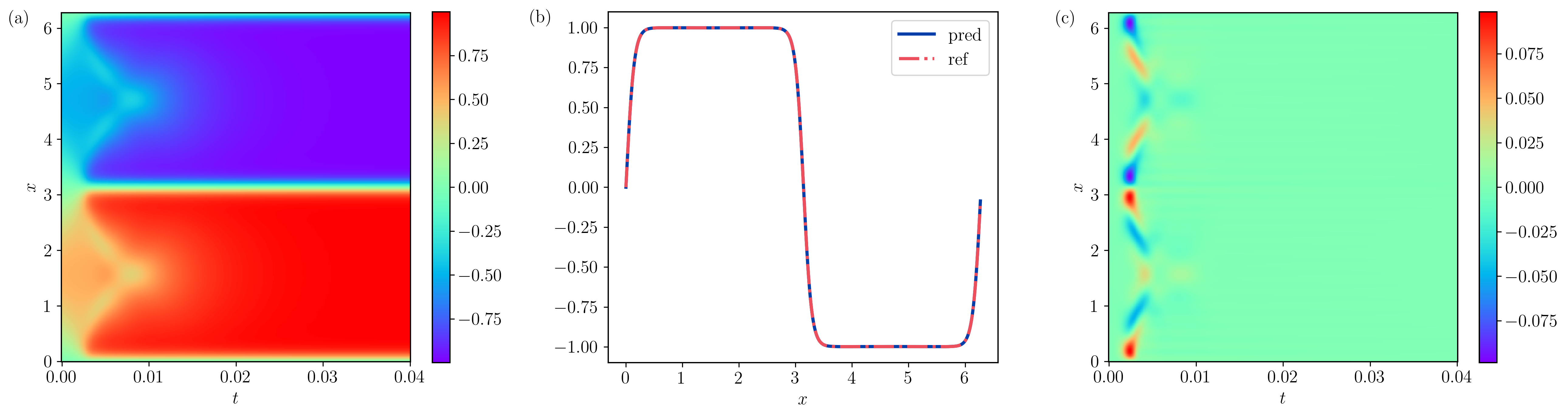}
    \caption{
    Visualization of Example~\ref{ex:2}: (a) global predicted solution; (b) predicted and reference solutions at the final time;
     (c) global error. 
    }
  \label{fig:ex2}
\end{figure}

\begin{figure}[h]
  % \vskip 0.2in
  \centering
    \includegraphics[width=0.9\textwidth]{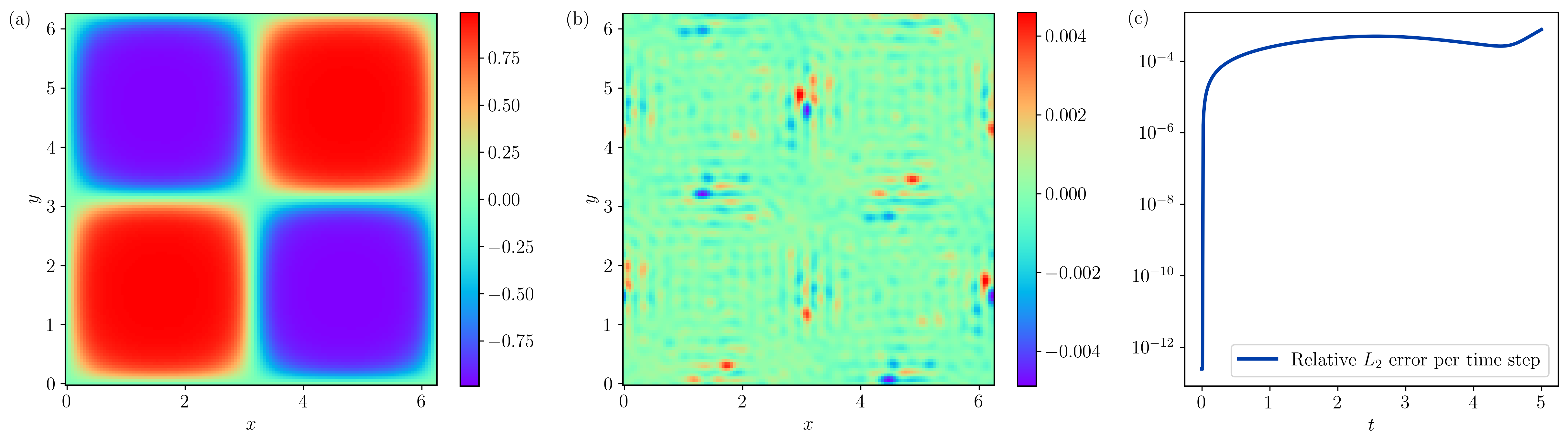}
    \caption{
    Visualization of Example~\ref{ex:3}: (a) predicted and reference solutions at the final time; (b) error at the final time; 
    (c) relative $L^2$ errors at each time step.
    }
  \label{fig:ex3}
\end{figure}

\begin{figure}[h]
  % \vskip 0.2in
  \centering
    \includegraphics[width=0.9\textwidth]{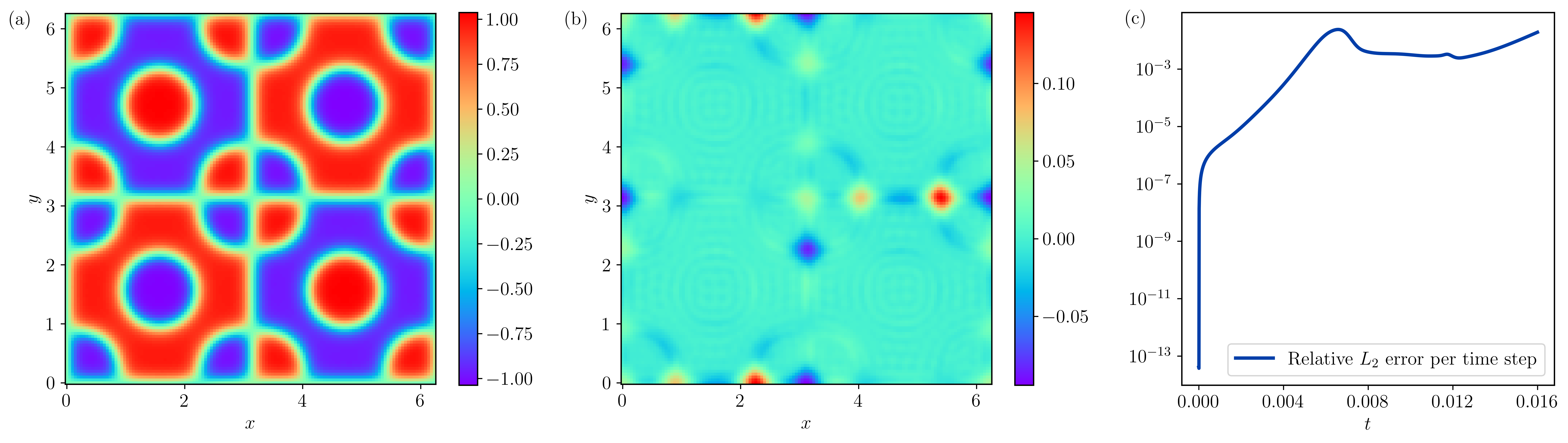}
    \caption{
    Visualization of Example~\ref{ex:4}: (a) predicted and reference solutions at the final time; (b) error at the final time; 
    (c) relative $L^2$ errors at each time step.
    }
  \label{fig:ex4}
\end{figure}

\begin{figure}[t]
  % \vskip 0.2in
  \centering
    \includegraphics[width=0.9\textwidth]{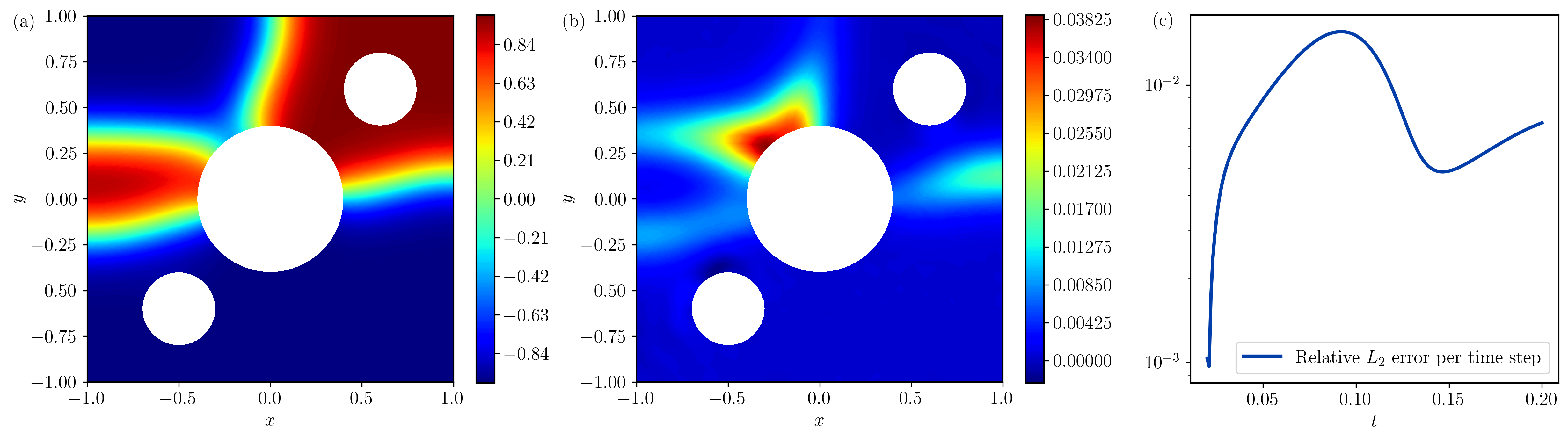}
    \caption{
    Visualization of Example~\ref{ex:5}: (a) predicted and reference solutions at the final time; (b) error at the final time; 
    (c) relative $L^2$ errors at each time step.
    }
  \label{fig:ex5}
\end{figure}

\begin{figure}[t]
  % \vskip 0.2in
  \centering
    \includegraphics[width=0.9\textwidth]{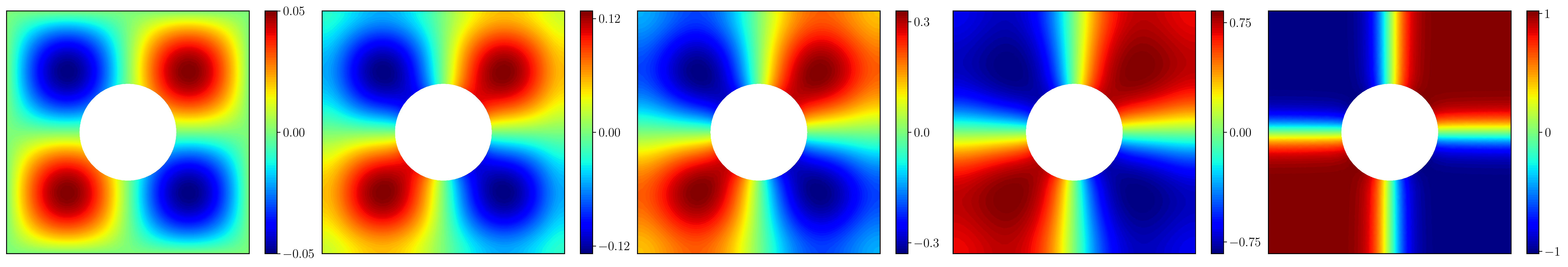}
    \caption{
    Evolution of Example~\ref{ex:6}: predictions at $t=0$, $t=0.03$, $t=0.06$, $t=0.1$, $t=0.15$
    }
  \label{fig:ex6}
\end{figure}

\begin{figure}[t]
  % \vskip 0.2in
  \centering
    \includegraphics[width=0.9\textwidth]{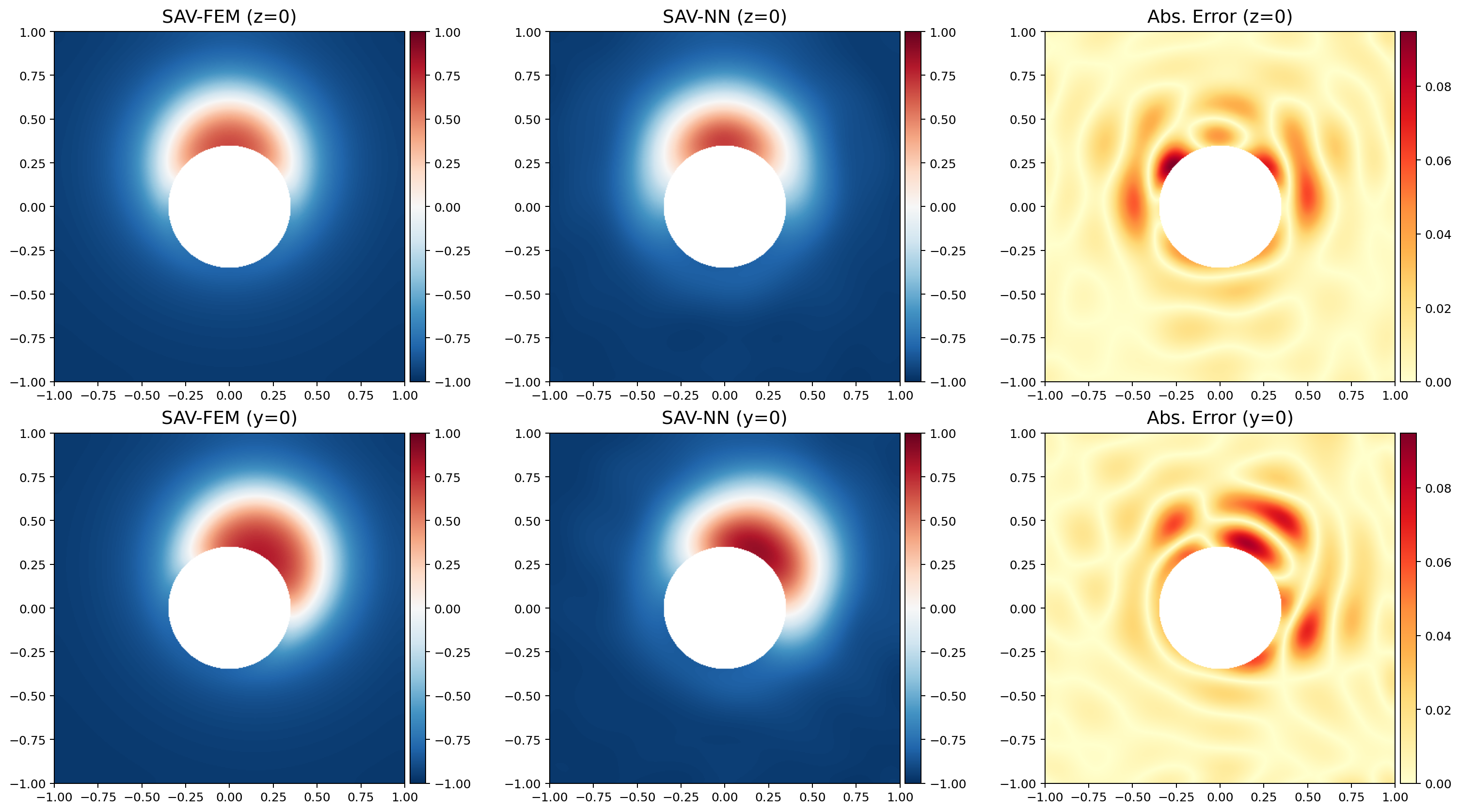}
    \caption{
    Performance of Example~\ref{ex:7}: predictions on the slices $y=0$ and $z=0$ at $t=0.02$. The left panel shows the result computed using the SAV-based finite element method, the middle panel shows the result obtained by our method, and the right panel shows the absolute error. The overall relative error is approximately 0.01.
    }
  \label{fig:ex7}
\end{figure}

\end{document}